\theoremstyle{plain}
\theoremstyle{plain}
\newtheorem{thm}{Theorem}
\newtheorem{prop}{Proposition}[section]
\newtheorem{lem}{Lemma}[section]
\newtheorem{propa}{Proposition}
\newtheorem{cor}{Corollary}[section]
\newtheorem{pte}{Property}[section]
\theoremstyle{definition}
\newtheorem{Def}{Definition}[section]
\theoremstyle{remark}
\newtheorem{rem}{Remark}[section]
\newtheoremstyle{restate}
{\topsep}
{\topsep}
{\itshape}
{}
{\bfseries}
{.}
{ }
{\thmname{#1}\thmnote{ #3}}
\theoremstyle{restate}
\newtheorem{thm*}{Theorem}
\newtheorem{prop*}{Proposition}
\newtheorem{lem*}{Lemma}
\newtheorem{add*}{Addendum}
\newtheorem{cor*}{Corollary}
\newtheorem{pte*}{Property}
\theoremstyle{definition}
\newtheorem{Def*}{Definition}
\newtheorem{exm*}{Example}
\theoremstyle{remark}
\newtheorem{rem*}{Remark}
\theoremstyle{definition}
\theoremstyle{remark}
\newcommand{\ov}{\overline}
\newcommand{\Id}{\mathop{\hbox{{\rm Id}}}\nolimits}
\newcommand{\diam}{{\rm diam\,}}
\newcommand{\Dev}{{\rm Dev\,}}
\newcommand{\ha}{\widehat}
\newcommand{\wti}{\widetilde}
\newcommand{\N}{\mathbb N}
\newcommand{\Z}{\mathbb Z}
\newcommand{\Q}{\mathbb Q}
\newcommand{\R}{\mathbb R}
\newcommand{\T}{\mathbb T}
\newcommand{\Cal}{\mathcal}
\newcommand{\mcr}{\mathscr}
\newcommand{\abs}[1]{\left\vert #1\right\vert}
\newcommand{\norm}[1]{\Vert#1\Vert}
\newcommand{\demi}{\frac{1}{2}}
\newcommand{\setm}{\setminus}
\DeclareMathOperator{\Per}{Per}
\DeclareMathOperator{\Fix}{Fix}
\DeclareMathOperator{\Hom}{Homeo}
\newcommand{\rk}{{\rm rank\,}}
\newcommand{\Bb}{\Cal B}
\newcommand{\Ii}{\Cal I}
\newcommand{\Jj}{\Cal J}
\newcommand{\CC}{\mcr C}
\newcommand{\BB}{\mcr B}
\newcommand{\PP}{\mcr P}
\newcommand{\NN}{\mcr N}
\newcommand{\II}{\mcr I}
\newcommand{\JJ}{\mcr J}
\newcommand{\OO}{\mcr O}
\newcommand{\al}{\alpha}
\newcommand{\be}{\beta}
\newcommand{\ga}{\gamma}
\newcommand{\sig}{\sigma}
\newcommand{\eps}{\varepsilon}
\newcommand{\Ga}{\Gamma}
\renewcommand{\th}{\theta}
\newcommand{\vp}{\varphi}
\newcommand{\Sig}{\Sigma}
\newcommand{\De}{\Delta}
\newcommand{\om}{\omega}
\newcommand{\Om}{\Omega}
\newcommand{\de}{\delta}
\newcommand{\ka}{\kappa}
\newcommand{\rit}{\rightarrow}
\newcommand{\ma}{\mapsto}
\newcommand{\ee}{\frac{\eps}{2}}
\newcommand{\inv}{^{-1}}
\newcommand{\bz}{\bar{z}}
\newcommand{\hp}{{\rm{h_{pol}}}}
\newcommand{\pq}{\frac{p}{q}}
\begin{document}
\selectlanguage{english}

\author{Clémence Labrousse}
\title[circle homeomorphisms  and $C^1$ nonvanishing vector fields on $\T^2$]{Polynomial entropy for the circle homeomorphisms  and for $C^1$ nonvanishing vector fields on $\T^2$}
\thanks{CEREMADE, Université Paris-Dauphine,
Place du maréchal de Lattre de Tassigny
email: labrousse@ceremade.dauphine.fr}

\date{}

\maketitle

\begin{abstract} 
We prove that the polynomial entropy of an orientation preserving homeomorphism of the circle equals $1$ when the homeomorphism is not conjugate to a rotation and that it is $0$ otherwise.
In a second part we prove that the polynomial entropy  of a flow on the two dimensional torus associated with a $C^1$ nonvanishing vector field is less that $1$. We moreover prove that when the flow possesses periodic orbits its polynomial entropy equals $1$ unless it is conjugate to a rotation (in this last case, the polynomial entropy is zero). 
\end{abstract}


\section{Introduction}
It is a classical althought blurred question to estimate the complexity of a dynamical system. 
One tool to make this more precise is the \emph{topological entropy} which may be seen as the exponential growth rate of the number of orbits of the system one needs to know in order to understand the whole set of orbits within a given precision. 
One often distinguishes between systems with zero topological entropy and systems with positive topological entropy and there exists several criterions to prove that a given system has a positive topological entropy.
Even if there are no criterion for a system to have zero entropy (unless being an isometry or contracting), there are several well-known zero entropy continuous systems: for instance the harmonic and anharmonic oscillators, the simple pendulum, orientation preserving homeomorphisms of the circle,  elliptic billiards...
Although all these systems have the same topological entropy, it seems obvious that the harmonic oscillator is simpler than the anharmonic one which is simpler than the simple pendulum. In the same way,  a rotation on the circle is simpler than a homeomorphism that possesses both periodic and wandering points or Denjoy sets and finally circular billiard looks simpler than any other elliptic billiards. 
It is therefore a natural question to estimate the complexity of such systems more precisely.
To do this, we no longer use an \emph{exponential} measure of the complexity, like the topological entropy, but a \emph{polynomial} measure of the complexity, namely the polynomial entropy. 
In the last years, the polynomial entropy has been studied by J-P Marco in the framework of Liouville integrable Hamiltonian systems. In \cite{Mar-09}, he computed the polynomial entropy for action-angle systems  and Hamiltonian systems on surfaces defined by a Morse function. 
As an application of these results one sees that the polynomial entropy of the harmonic oscillator is smaller than that of the anharmonic oscillator which is smaller than that of the simple pendulum.
But it turns out that the computation of the polynomial entropy is in general an intricate problem even for systems in low dimension. 

In the present paper, we compute the polynomial entropy $\hp$ for orientation preserving homeomorphisms of the circle and give estimates for that of nonvanishing vector fields on the torus. Then main results are the following.

\begin{thm}\label{cercle}
Let $f\in\Hom^+(\T)$. Then $\hp(f)\in\{0,1\}$ and $\hp(f)=0$ if and only if $f$ is conjugate to a rotation.
\end{thm}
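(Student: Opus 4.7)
My plan is to organize the proof around the classical trichotomy for orientation preserving circle homeomorphisms: either $f$ is topologically conjugate to a rotation, or $f$ has rational rotation number $p/q$ with $f^q$ not the identity, or $f$ has irrational rotation number and is of Denjoy type (so it admits a minimal invariant Cantor set and wandering arcs). Polynomial entropy is a topological conjugacy invariant on a compact metric space, and since a rotation is an isometry one has $S_n(\eps, R_\alpha)$ bounded uniformly in $n$ and hence $\hp(R_\alpha) = 0$; this disposes of the rotation case. It remains to prove the universal upper bound $\hp(f) \leq 1$ and the lower bound $\hp(f) \geq 1$ in the two remaining cases.

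For the upper bound I fix $\eps > 0$ and cover $\T$ by $N = O(1/\eps)$ open arcs $I_1, \ldots, I_N$ of length at most $\eps/2$. Because $f$ is orientation preserving, each preimage $f^{-j}(I_k)$ is again an arc, so the atoms of the common refinement $\bigvee_{j=0}^{n-1} f^{-j}\{I_1, \ldots, I_N\}$ are themselves arcs whose boundary points all lie in the set $\bigcup_{j=0}^{n-1} \bigcup_{k=1}^{N} f^{-j}(\partial I_k)$, of cardinality at most $2Nn$. Picking one representative from each nonempty atom yields an $(n, \eps)$-spanning set of cardinality $O(n)$, hence $\hp(f) \leq 1$.

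For the lower bound, my idea is to reduce, in both non-rotation cases, to the existence of a wandering open arc $J$ for some iterate $g = f^q$. This reduction is harmless because rescaling time by a fixed constant preserves the polynomial growth rate of separated sets, so $\hp(f^q) = \hp(f)$. In the Denjoy case one takes $q = 1$ and lets $J$ be any wandering arc supplied by the failure of minimality. In the rational case, if $f$ has rotation number $p/q$ and is not conjugate to $R_{p/q}$, then $g = f^q$ has fixed points but is not the identity; picking a maximal arc $(a, b)$ of $\T \setminus \Fix(g)$ and any $c \in (a,b)$, the sequence $g^n(c)$ is strictly monotone in $(a, b)$, so the arc $J = (c, g(c))$ is wandering for $g$.

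Once such a wandering arc $J$, of length $L > 0$, is in hand, I fix $\eps \in (0, L/3)$, let $A_\eps \subset J$ be the sub-arc at distance at least $\eps$ from $\partial J$, and choose $p_m \in g^{-m}(A_\eps)$ for each $m = 0, 1, \ldots, n-1$. If $m < m'$, then at time $m$ one has $g^m(p_m) \in A_\eps \subset J$ while $g^m(p_{m'}) \in g^{m-m'}(A_\eps) \subset g^{m-m'}(J)$, which is disjoint from $J$ by the wandering property, so $|g^m(p_m) - g^m(p_{m'})| \geq \dist(A_\eps, \T \setminus J) \geq \eps$. The $n$ points $p_0, \ldots, p_{n-1}$ therefore form an $(n, \eps)$-separated set for $g$, giving $\hp(f) = \hp(g) \geq 1$. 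I expect the main delicate point to be the unification of the Denjoy and rational-but-not-rotation cases through the single device of a wandering interval; once this reduction is in place, both the refined partition count for the upper bound and the backward-orbit construction for the lower bound are essentially automatic.
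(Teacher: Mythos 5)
Your proof is correct, and your upper bound takes a genuinely simpler and more uniform route than the paper's. The paper proves $\hp(f)\leq 1$ separately in the rational and irrational rotation-number cases (Propositions~\ref{rationel} and~\ref{irrationel}), building covers adapted to the structure of $\Per(f)$ or of the nonwandering Cantor set and relying on Lemma~\ref{prop:indseg} as the combinatorial workhorse in the rational case. You instead exploit the single fact that a circle homeomorphism maps arcs to arcs: the common refinement $\bigvee_{j=0}^{n-1}f^{-j}\Pp$ of a partition $\Pp$ of $\T$ into $N$ arcs of length $\leq\eps$ is cut out by at most $Nn$ boundary points, hence has at most $Nn$ connected pieces, each of $d_n^f$-diameter $\leq\eps$; thus $D_n^f(\eps)\leq Nn$ for \emph{every} $f\in\Hom^+(\T)$. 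This avoids the case analysis and any appeal to Poincar\'e--Denjoy structure theory in the upper bound. The only small slip is that $\Pp$ should be a genuine partition (closed arcs meeting only at endpoints) rather than an overlapping open cover, so that the atoms of the refinement and their boundary points are cleanly defined; the bound $2Nn$ is then really $Nn$, but this is immaterial. What the paper's longer route buys is explicit geometric control of each regime (drift between successive periodic points, gaps of the Cantor set), and that geometry is recycled in Section~4 for the torus theorem, so the extra structure is not wasted there. Your lower bound coincides in substance with the paper's Proposition~\ref{pointerrant}: a wandering arc yields $n$ backward preimage points whose $(n,\eps)$-separation is witnessed at the time each returns to the arc, and the reduction to a wandering arc via the rational/Denjoy trichotomy together with $\hp(f^q)=\hp(f)$ (Property~\ref{ptehphw}(4)) is exactly the paper's route.
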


\begin{thm}\label{tore}
Let $X$ be a $C^1$ nonvanishing vector field on the torus $\T^2$ with associated flow $\phi_X=(\phi_X^t)_{t\in \R}$.
Then $\hp(\phi_X)\in [0,1]$.
Moreover, if $\phi_X$ possesses periodic orbits, $\hp(\phi_X)\in\{0,1\}$ and $\hp(\phi_X)=0$ if and only if $\phi_X^1$ is conjugate to a rotation.
\end{thm}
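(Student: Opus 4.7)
The strategy is to reduce Theorem~\ref{tore} to Theorem~\ref{cercle} via a global Poincar\'e section. Since $X$ is a $C^1$ nonvanishing vector field on $\T^2$ and $\chi(\T^2)=0$, a classical construction yields a smooth simple closed curve $\Sigma \subset \T^2$ transverse to $X$ that meets every orbit. Let $P : \Sigma \to \Sigma$ be the first-return map, an orientation-preserving homeomorphism of $\Sigma\simeq\T$, and let $\tau : \Sigma \to (0,\infty)$ be the continuous return time, with $0 < \tau_{\min} \leq \tau \leq \tau_{\max}$. Then $\phi_X$ is topologically conjugate to the suspension of $(P,\tau)$.

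\textbf{Upper bound.} To establish $\hp(\phi_X) \leq 1$ I would prove $\hp(\phi_X^1) \leq \hp(P)$ and invoke Theorem~\ref{cercle}. A Bowen $(T,\eps)$-ball of $\phi_X$ around a point $\phi_X^s(y)$ with $y \in \Sigma$, $s \in [0,\tau(y))$, is, up to a bi-Lipschitz change of coordinates in a flow box, the product of an $\eps$-arc in the flow direction --- which is preserved by $\phi_X$ and therefore contributes only $O(1/\eps)$ balls, uniformly in $T$ --- and a Bowen $(N,\eps')$-ball for $P$ on $\Sigma$, with $N \leq T/\tau_{\min}$. Hence $G_{\phi_X}(T,\eps) \leq C(\eps)\,G_P(\lfloor T/\tau_{\min}\rfloor,\eps')$, so $\hp(\phi_X^1) \leq \hp(P) \leq 1$ by Theorem~\ref{cercle}.

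\textbf{Periodic case.} Assume $\phi_X$ admits a periodic orbit. Then $P$ has a periodic point, its rotation number is rational, and Theorem~\ref{cercle} gives $\hp(P) \in \{0,1\}$. If $\phi_X^1$ is conjugate to a rotation of $\T^2$ then $\hp(\phi_X)=0$ trivially. Otherwise I would construct Bowen $(T,\eps)$-separated sets on $\T^2$ of cardinality $\asymp T$, yielding $\hp(\phi_X^1) \geq 1$. Two subcases arise. If $P$ is not conjugate to a rotation, then $\hp(P)=1$, and a Bowen $(n,\eps)$-separated set on $\Sigma$ for $P$ of cardinality $\gtrsim n$ pushes forward, via the suspension and using continuity of $\tau$, to a Bowen $(T,\eps/2)$-separated set on $\T^2$ for $\phi_X^1$ with $T\asymp n$. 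If $P$ is conjugate to some rational rotation $R_{p/q}$, then all orbits of $\phi_X$ are periodic, and the failure of $\phi_X^1$ to be a rotation forces the period function $\tau^{(q)}(x)=\sum_{j=0}^{q-1}\tau(P^j x)$ to be non-constant on $\Sigma$; nearby orbits then have different periods, so their images under $\phi_X^1$ drift apart linearly in time, producing again $\asymp T$ separated points.

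\textbf{Main obstacle.} The delicate step is the second subcase above: quantitatively converting the non-constancy of $\tau^{(q)}$ into a uniform Bowen-separation bound for trajectories on distinct close periodic orbits of different periods, tracking the angular drift along each orbit. A secondary difficulty concerns the converse direction of the characterization: to deduce from $\hp(\phi_X)=0$ both that $P$ is conjugate to a rational rotation \emph{and} that $\tau^{(q)}$ is constant on $\Sigma$, so that one can rectify $\phi_X$ into a linear flow on $\T^2$ and $\phi_X^1$ into a rigid translation; this requires producing a continuous conjugacy that simultaneously straightens the foliation by orbits and trivializes the period function.
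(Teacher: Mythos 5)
Your proposal contains two genuine gaps; the paper takes a substantially different route, particularly in the periodic case.

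\textbf{Global section.} The claim that $\chi(\T^2)=0$ yields a smooth closed curve transverse to $X$ meeting every orbit is incorrect. Euler characteristic zero guarantees the existence of a nonvanishing vector field, not of a global closed cross-section. If $\phi_X$ possesses two periodic orbits traversed in opposite senses (a zone of type (II) in the paper's terminology, e.g. the field $X(\th,r)=(\cos 2\pi r,\sin 2\pi r)$ on $\T^2$), then no global closed transverse curve $\Sigma$ exists: $X$ must point consistently to one side of $\Sigma$, forcing every periodic orbit to have an intersection number with $\Sigma$ of the same sign, whereas $[\gamma_0]=-[\gamma_{1/2}]$ in $H_1(\T^2)$. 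The paper invokes a global section only in the case \emph{without} periodic orbits (Proposition~\ref{without}). With periodic orbits it instead decomposes $\T^2$ into annular zones of types (I), (II), (III) bounded by periodic orbits, proving $\hp\le 1$ zone by zone (Propositions~\ref{type3}, \ref{type12}) and handling accumulations of periodic orbits via Lemmas~\ref{sectiontransverse} and \ref{hpdansS}.

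\textbf{Upper bound.} Even where a global section exists, the estimate $G_{\phi_X}(T,\eps)\le C(\eps)\,G_P(\lfloor T/\tau_{\min}\rfloor,\eps')$, hence $\hp(\phi_X^1)\le \hp(P)$, is false. The flow-box product picture is not invariant: the accumulated return time $\tau^{(k)}(x)=\sum_{j<k}\tau(P^jx)$ varies with $x$, so two points whose $P$-orbits remain $\eps'$-close nevertheless drift apart in the flow direction. The cleanest counterexample is $P=\Id$ with $\tau$ non-constant (all orbits periodic with distinct periods): then $\hp(P)=0$, yet $\hp(\phi_X)=1$ by Proposition~\ref{AA}. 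Thus $\hp(\phi_X)\le 1$ cannot be deduced from $\hp(P)\le 1$; the paper instead proves it directly, controlling this phase drift via the $L^1$ comparison lemma (Lemma~\ref{lemmecomparaisonL1}, Proposition~\ref{L1Deviation} and Corollary~\ref{corollaire}), which bounds the horizontal deviation of one orbit from a neighbouring one by the $L^1$-norm of the gap between them, and this suffices to produce a covering of linear size.

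Finally, for the dichotomy in the periodic case your ``second subcase'' (quantifying non-constancy of the period function) is precisely where the paper avoids ad hoc estimates by appealing to the action-angle result (Proposition~\ref{AA}), after conjugating a type-(III) region to an action-angle normal form $\psi^t(\th,r)=(\th+t\om(r),r)$ with $\om=1/\tau$; and the lower bound for types (I) and (II) comes for free from the existence of wandering points (Proposition~\ref{pointerrant}), with no need for a global section or Poincar\'e map.
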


The organisation of the paper is the following. In Section 2 we recall the definition and some classical properties of the polynomial entropy.
In Section~3, we prove Theorem \ref{cercle} and in Section 4, we prove Theorem \ref{tore}.

\subsection*{Acknowledgements} Part of this paper was realized when I was in a postdoctoral position at the Université de Neuchâtel and when I was invited at ETH Zürich. I am grateful to Felix Schlenk for the postdoctoral position in Neuchâtel and to Paul Biran for the invitation at ETH Zürich. 

Finally, I wish to thank warmly Jean-Pierre Marco for numerous helpful discussions, notably on the estimates of the polynomial entropy of $C^1$ nonvanishing vector field on the torus.

\section{The polynomial entropy}

We consider a continuous  map $f:X\rit X$, where $(X,d)$ is a compact metric space $(X,d)$. 
Let $(X,d)$ be a compact metric space and $f$ a continuous map $X\rit X$. We construct new metrics $d_n^f$ on $X$, which depend on the iterations of $f$, by setting 
\[
d_n^f(x,y)=\max_{0\leq k\leq n-1}d(f^k(x),f^k(y)).
\]
These metrics are the \textit{dynamical metrics} associated with $f$. Obviously, if $f$ is an isometry or is contracting, $d^f_n$ coincides with $d$ and in general $d_n^f$ is topologically equivalent to $d$.
Given $Y\subset X$ (not necessarily $f$-invariant) we denote by $G_n^f(Y,\eps)$ the minimal number of balls of radius $\eps$ for the metric $d_n^f$ in a finite covering of $Y$ (the centers of the balls do not necessarily belong to $Y$). Where $Y=X$, we only write $G_n^f(\eps)$.

\begin{Def}
The \emph{polynomial entropy} $\hp(f,Y)$ of $f$ on $Y$ is defined by
\begin{align*}
\hp(f,Y) &=\sup_{\eps}\inf\left\{\sigma>0|\limsup\frac{1}{n^{\sigma}}G_n^f(Y,\eps)=0\right\}\\
& =\lim_{\eps\rit 0}\limsup_{n\rit\infty}\frac{\log G_n^f(Y,\eps)}{\log n}.
\end{align*}
When $Y=X$, we just write $\hp(f)$, and we call it the  polynomial entropy of $f$.
\end{Def}

Instead of balls of radius $\eps$, we can consider sets with diameter smaller than or equal to $\eps$ for the metric $d^f_n$. We denote by $D_n(Y,\eps)$ the smallest number of sets $X_i$ such that
\[
 Y \subset \bigcup_i X_i \quad\textrm{and}\quad {\rm diam}_{d_f^n}X_i\leq\eps.
\]
We also consider sets that are $\eps$\emph{-separated}  for the metrics $d_n^f$ (we will write $(n,\eps)$-separated). Recall that a set $E$ is said to be $\eps$-separated for a metric $d$ if  for all $(x,y)$ in $E^2$, $d(x,y)\ge \eps$. 
Denote by  $S_n^f(Y,\eps)$ the maximal cardinal of a $(n,\eps)$-separeted set contained  in $Y$. When $Y=X$, we only write $D_n^f(\eps)$ and $S_n^f(\eps)$.
Observe that: 
\[
D_n^f(Y,2\eps)\leq G_n^f(Y,\eps) \leq D_n^f(Y,\eps) \quad\textrm{and}\quad
S_n^f(Y,2\eps)\leq G_n^f(Y,\eps)\leq S_n^f(Y,\eps).
\]
Therefore
\[
\hp(f,Y) =\lim_{\eps\rit 0}\limsup_{n\rit\infty}\frac{\log D_n^f(Y,\eps)}{\log n}=\lim_{\eps\rit 0}\limsup_{n\rit\infty}\frac{\log S_n^f(Y,\eps)}{\log n}.
\]

\begin{rem}
If $\phi:=(\phi^t)_{t\in \R}$ is a continuous flow on $X$, for $t>0$ and $\eps>0$,  we can define in the same way the numbers $G_t^\phi(\eps), D_t^\phi(\eps)$ and $S_t^\phi(\eps)$.
The polynomial entropy  $\hp(\phi,Y)$ of $\phi$ on $Y\subset X$ is defined as 
\[
\hp(\phi,Y) =\lim_{\eps\rit 0}\limsup_{t\rit\infty}\frac{\log D_t^f(Y,\eps)}{\log t}=\lim_{\eps\rit 0}\limsup_{t\rit\infty}\frac{\log S_t^f(Y,\eps)}{\log t}.
\] 
One easily checks that if $\phi^1$ is the time-one map of $\phi$, $\hp(\phi)=\hp(\phi^1)$.
\end{rem}

The following properties of the polynomial entropy are proved in  \cite{Mar-09}. 
\begin{pte}\label{ptehphw}
\begin{enumerate} 
\item $\hp$ is a $C^0$ conjugacy invariant, and does not depend on the choice of topologically equivalent
metrics on $X$.
\item If $Y\subset X$ is \emph{invariant} under $f$ and endowed with the induced metric, then $\hp(f,Y)=\hp(f_{\vert Y})$.
\item If $A$ is a subset of $X$ invariant under $f$, $\hp(f_{\vert_A})\leq \hp(f)$.
\item For $m\in\N$, $\hp(f^m) =\hp(f)$ and if $f$ is invertible, $\hp(f^{-m})=\hp(f)$.
\item If $A=\cup_{i=1}^n A_i$ where $A_i$ is $f$-invariant, $\hp(f_{\vert_A}) = \max_i(\hp(f_{|_{A_i}})$.
\end{enumerate}
\end{pte}

\begin{prop}\label{pointerrant}
Let $(X,d)$ be a compact metric space and let $f: X\rit X$ be a homeomorphism on $X$. Assume that $f$ possesses a wandering point. Then $\hp(f)\geq 1$.
\end{prop}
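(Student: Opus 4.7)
The plan is to exhibit, for each integer $n\ge 1$, an $(n,\eps_0)$-separated subset of $X$ of cardinality $n$, for some $\eps_0>0$ independent of $n$. Since
\[
\hp(f)=\lim_{\eps\rit 0}\limsup_{n\rit\infty}\frac{\log S_n^f(\eps)}{\log n},
\]
such a bound will give $S_n^f(\eps_0)\ge n$ for every $n$, hence $\hp(f)\ge 1$.

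Let $x\in X$ be a wandering point and let $U\ni x$ be an open wandering neighborhood, so that $f^n(U)\cap U=\emptyset$ for every $n\ge 1$. Because $f$ is a homeomorphism, applying $f^{-n}$ yields the symmetric property $f^{-n}(U)\cap U=\emptyset$ for every $n\ge 1$ as well. Choose $\eps_0>0$ small enough that $B(x,\eps_0)\subset U$. The key observation is then that for every $m\ge 1$ the point $f^{-m}(x)$ lies in $f^{-m}(U)\subset X\setminus U$, so $d(x,f^{-m}(x))\ge \eps_0$.

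I would then consider the $n$ points of the backward orbit of $x$, namely $z_k:=f^{-k}(x)$ for $k=0,1,\dots,n-1$, and check that they form an $(n,\eps_0)$-separated set. Given $0\le j<k\le n-1$, the time $i:=j$ lies in $\{0,\dots,n-1\}$, and
\[
d_n^f(z_j,z_k)\ge d\bigl(f^j(z_j),f^j(z_k)\bigr)=d\bigl(x,f^{-(k-j)}(x)\bigr)\ge \eps_0,
\]
by the previous inequality applied to $m=k-j\ge 1$. Hence $\{z_0,\dots,z_{n-1}\}$ is $(n,\eps_0)$-separated, so $S_n^f(\eps_0)\ge n$ and the conclusion follows.

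The only substantive point is the choice of the backward orbit rather than the forward orbit: with $z_k=f^{-k}(x)$, each pair $(z_j,z_k)$ admits the explicit witness time $i=j\in\{0,\dots,n-1\}$ at which $f^i(z_j)=x$ sits at the center of the wandering ball while $f^i(z_k)$ has been pushed outside it. With the forward orbit $f^k(x)$ one could only separate $x$ from later iterates (by taking $i=0$), never two later iterates from each other, because the mere disjointness of the sets $f^j(U)$ and $f^k(U)$ yields no positive lower bound on the distance between their elements. Beyond this observation the argument is free of technical obstacles and uses neither compactness nor any regularity of $f$ beyond invertibility and continuity.
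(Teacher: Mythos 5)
Your proof is correct and follows essentially the same route as the paper's: both take the backward orbit $\{x, f^{-1}(x),\dots,f^{-(n-1)}(x)\}$ of a wandering point and use the wandering ball to show it is $(n,\eps_0)$-separated, giving $S_n^f(\eps_0)\ge n$. You spell out the pairwise separation (via the witness time $i=j$) slightly more explicitly than the paper does, but the underlying idea is identical.
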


\begin{proof}
Let $x$ be a wandering point for $f$. Then, there exists $\eps_0>0$ such that for all $k\in \N$, $f^k(B(x,\eps_0))\cap B(x,\eps_0)=\emptyset$.
Then, for all $\eps \leq \eps_0$, and all $k\in \N$, $d(x,f^k(x))\geq \eps$. 
Fix $n\in \N$. For $1\leq k \leq n$, $d_n^f(x, f^{-k}(x)) \geq d(f^{k}(x), x) \geq \eps$.
So for all $\eps\leq \eps_0$ and all $n\in \N$, the points $x, f\inv(x),\dots, f^{-n}(x)$ are $(n,\eps)$--separated,  and $S_n^f(\eps)\geq n$ which proves that $\hp(f)\geq 1$.
\end{proof}

\section{Homeomorphisms of the circle}

We denote by $\Hom^+(\T)$ the group of orientation-preserving homeomorphisms of the circle $\T$. If $f\in \Hom^+(\T)$, there exists a continuous increasing function $F: \R \rit \R$ such that $\pi\circ F=f\circ \pi$, where $\pi :\R \rit \T$ is the canonical projection. 
We say that $F$ is a lift of $f$ to $\R$. Such a lift is unique up to an integer.
The most simple example of such a homeomorphism is the one of a rotation. Obviously, the polynomial entropy of a rotation is zero.
%
Poincaré proved that if $F$ is the lift of $f\in \Hom^+(\T)$, there exists a unique number $\rho(F)\in \R$ such that for all $x\in \R$, $\lim_{n\rit\pm \infty} \frac{F^n(x)}{n}=\rho(F)$. This number is called the \emph{rotation number of $F$}.
For any $k\in \Z$, $\rho(F+k)=\rho(F)+k$, so the class in $\T$ of the rotation number $\rho(F)$ of a lift $F$ of $f\in \Hom^+(\T)$ is independent of the choice of the lift. This class is called the \emph{rotation number of $f$}.

For $a\in \R$, we set $T_a : \R \rit \R : x \ma x+a$. The translation $T_a$ is a lift of the \emph{rotation of angle $a$} $\tau_a : \T  \rit \T : \bar{x} \ma \bar{x}+ \bar{a}$, where $\bar{a}=\pi(a)$. 
The rotation number satisfies the following properties.
\begin{pte}
\begin{enumerate}
\item For any $a\in \R$, $\rho(T_a)= a$.
\item If $f\in \Hom^+(\T)$, for any lift $F$ of $f$ and any $n\in \N^*$, $\rho(F^n)=n\rho(F)$.
\end{enumerate}
\end{pte}
The knowledge of the rotation number of $f\in \Hom^+(\T)$ permit to describe the behavior of the dynamical system generated by $f$. Indeed this behavior depends on whether $\rho(f)$ is rational or irrational.
In Section 3.1 we remind the dymamics behavior when the rotation number is rational and we prove Theorem  \ref{cercle} in this case (Proposition \ref{rationel}). In Section 3.2  we remind the dymamics behavior when the rotation number is irrational and prove Theorem  \ref{cercle} in this second case (Proposition \ref{irrationel}).

\subsection{Homeomorphisms with rationnal rotation number}

An element $f$ of $\Hom^+(\T$) has rational rotation number if and only $f$ possesses periodic orbits, and it has zero rotation number if and only if $f$ possesses  fixed points.

We begin with the case when $\rho(f)=0$. We denote by $\Fix(f)$ the set of fixed points of $f$. If $f\neq \Id$, then $\T\setm \Fix(f)$ is a nonempty open domain of $\T$ whose connected component are invariant by $f$. 
Moreover if $I=\,]\bar{a}, \bar{b}[$ is such a connected component, the $\om$--limit set of $I$ is one of the extremities of $I$ and the $\al$--limit set is the other one.

Assume now that $\rho(f)=\pq$ where $p,q$ are two coprime integers. Then all the periodic orbits have a common smallest period equal to $q$. Moreover, $f$ is conjugate to a rotation if and only if all its orbit are periodic. Otherwise, the set $\Per(f)$ of the periodic points of $f$ is a proper compact subset of $\T$.
Finally, observe that if $\rho(f)=\pq$, then $\rho(f^q)=0$. Indeed $\Per(f)=\Fix(f^q)$. In particular, if $f$ is not conjugate to a rotation, $\T\setm \Per(f)$ is a nonempty open domain whose connected component are invariant by $f^q$.
The main result of this section is the following.

\begin{prop}\label{rationel}
Let $f\in \Hom^+(\T)$ such that $\rho(f)\in \Q$. Assume that $f$ is not conjugate to a rotation. Then 
$
 \hp(f)=1.
$
\end{prop}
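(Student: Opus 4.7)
The plan is to reduce to $\rho(f)=0$ via $\hp(f)=\hp(f^q)$ (Property~\ref{ptehphw}(4)) and then to establish matching bounds. Writing $\rho(f)=p/q$, the map $f^q$ has rotation number $0$ and is not the identity (otherwise every orbit of $f$ would be periodic and $f$ would be conjugate to a rotation). Replacing $f$ by $f^q$, I therefore assume $\rho(f)=0$ and $f\neq\Id$. Then $F:=\Fix(f)$ is a proper closed subset of $\T$, and on each connected component $I_j=(a_j,b_j)$ of $\T\setminus F$ the fixed-point-free restriction $f|_{I_j}$ is an orientation-preserving homeomorphism of $I_j$: in any lift $f(x)-x$ has constant sign on $I_j$, every orbit in $I_j$ is strictly monotone, and converges forward to one endpoint of $I_j$ and backward to the other.

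For the lower bound, pick $x\in I_j$ with $f(x)>x$ and choose an interval neighbourhood $U\subset I_j$ of $x$ small enough that $f(U)$ lies strictly to the right of $U$. The monotone pushing of $f$ gives $f^k(U)\subset(\sup U,b_j)$ for every $k\geq 1$, so $f^k(U)\cap U=\emptyset$, $x$ is a wandering point, and Proposition~\ref{pointerrant} yields $\hp(f)\geq 1$.

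For the upper bound, fix $\eps>0$ and set $\eta=\eps/8$. Since the components of $\T\setminus F$ have total length at most $|\T|$, only finitely many, say $I_1,\dots,I_N$, have length $\geq\eta$. The complement $K:=\T\setminus(I_1\cup\cdots\cup I_N)$ is a union of fixed points and invariant intervals of length $<\eta$, so any orbit starting in $K$ stays within $\eta$ of its starting point. Covering $K$ by $O(1/\eps)$ Euclidean balls of radius $\eps/4$, any two points $y_1,y_2$ in the same such ball satisfy $d(f^k(y_1),f^k(y_2))\leq 2\eta+\eps/2<\eps$ for every $k$, so these are dynamical $\eps$-balls, independently of $n$. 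On each big component $I_j$ I bound $S_n^f(I_j,\eps)$ using the order-preservation of $f^k$: if $\{x_1<\cdots<x_m\}\subset I_j$ is $(n,\eps)$-separated then for each consecutive pair $(x_i,x_{i+1})$ some $k_i\in\{0,\dots,n-1\}$ satisfies $d(f^{k_i}(x_i),f^{k_i}(x_{i+1}))\geq\eps$, and at each fixed $k$ the ordered images $f^k(x_1)<\cdots<f^k(x_m)$ lie in $I_j$ with consecutive $d$-gaps summing to at most $|\T|$, so at most $|\T|/\eps$ of those gaps can be $\geq\eps$. Summing over $k$ gives $m-1\leq n|\T|/\eps$, whence $G_n^f(I_j,\eps)\leq S_n^f(I_j,\eps)\leq 1+n|\T|/\eps$. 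Adding the $N$ contributions to that of $K$ yields $G_n^f(\T,\eps)=O(n)$, hence $\hp(f)\leq 1$.

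The main difficulty I anticipate is the treatment of $K$: countably many small invariant components may accumulate on $F$, so Property~\ref{ptehphw}(5), which only handles finite unions of invariant sets, cannot be invoked directly. Absorbing the small invariant intervals into ordinary Euclidean balls whose dynamical diameter is controlled via the invariance is the real technical content of the upper bound; the rest of the argument is essentially the separated-set counting on each big component, made efficient by the fact that $f^k$ preserves the linear order of $I_j$.
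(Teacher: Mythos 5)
Your proof is correct. The lower bound (wandering point) is identical to the paper's, but your upper bound takes a genuinely different route. The paper covers $\Fix(f^q)$ by finitely many Euclidean $\eps/2$-balls, replaces each by the invariant hull interval of the fixed points it contains (whose dynamical diameter thus stays $\leq\eps$ for all $n$), and then handles the finitely many fixed-point-free gap intervals between hulls by invoking Lemma~\ref{prop:indseg}, whose proof is an explicit construction of an $O(n)$-element cover obtained by pulling back a fixed finite partition of a fundamental interval $[a+\eps,f(a+\eps)]$ under $f^{-1},\dots,f^{-n}$. You instead split $\T\setminus\Fix(f^q)$ by a length threshold: the small invariant components together with $\Fix(f^q)$ form a set $K$ on which orbits barely move, so a time-independent Euclidean $\eps/4$-cover already has small dynamical diameter for all $n$; on each of the finitely many large components you bound $(n,\eps)$-separated sets directly by pigeonhole, assigning to each consecutive pair a time at which it separates and noting that, since $f^k$ preserves the linear order of the component, at most $|\T|/\eps$ consecutive gaps at any fixed time can exceed $\eps$. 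This counting argument is more elementary than, and entirely bypasses, Lemma~\ref{prop:indseg}, and it yields clean explicit constants. Both proofs must absorb a possibly infinite accumulation of small invariant components into finitely many sets of uniformly small dynamical diameter, but your length-threshold mechanism is arguably more transparent than the paper's device of taking hulls of intersections of $\Fix(f^q)$ with a finite ball cover.
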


The proof is based on the following lemma.

\begin{lem}\label{prop:indseg} 
Let $I=[a,b]$ be a compact interval in $\R$ and let $f : I\rit I$ be a continuous,  increasing function such that $f(a)=a$, $f(b)=b$ and $f(x)-x\neq 0$ for all $x\in\, ]a,b[$. Then
$
\hp(f)=1.
$
\end{lem}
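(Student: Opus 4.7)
The plan is to establish both inequalities $\hp(f) \geq 1$ and $\hp(f) \leq 1$ separately. First I would note that since $f$ is continuous with $f(x) - x$ nonvanishing on $(a,b)$, the intermediate value theorem forces either $f(x) > x$ throughout $(a,b)$ or $f(x) < x$ throughout; by replacing $f$ with $f^{-1}$ if necessary and invoking the invariance $\hp(f^{-1}) = \hp(f)$ from Property \ref{ptehphw}, I may assume $f(x) > x$ on $(a,b)$. Then for any $y \in (a,b)$, the forward orbit $(f^k(y))_{k \geq 0}$ is strictly increasing with limit $b$, so $y$ is a wandering point, and Proposition \ref{pointerrant} immediately yields $\hp(f) \geq 1$.

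For the upper bound I would estimate the number of $(n,\eps)$-separated points, exploiting the fact that every iterate $f^k$ remains monotonically increasing. Given an $(n,\eps)$-separated set $\{y_1 < y_2 < \cdots < y_N\} \subset [a,b]$, to each consecutive pair $(y_i, y_{i+1})$ I attach a time $k_i \in \{0, \ldots, n-1\}$ at which $|f^{k_i}(y_{i+1}) - f^{k_i}(y_i)| \geq \eps$; monotonicity promotes this to $f^{k_i}(y_{i+1}) - f^{k_i}(y_i) \geq \eps$. Fixing $k$ and summing over those indices $i$ with $k_i = k$, the resulting subintervals $[f^k(y_i), f^k(y_{i+1})]$ are pairwise disjoint and contained in $[a,b]$, so there can be at most $(b-a)/\eps$ such indices. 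Summing over $k \in \{0, \ldots, n-1\}$ then gives $N - 1 \leq n(b-a)/\eps$, whence $S_n^f([a,b], \eps) = O(n)$ and $\hp(f) \leq 1$.

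The hard part is the upper bound. A direct covering argument is awkward: the interval $[b-\delta, b]$ is forward invariant with $d_n^f$-diameter uniformly bounded by $\delta$, so it can be covered by a single ball; but $[a, a+\delta]$ is only backward invariant, and under forward iteration its points can separate and spread across nearly all of $[a,b]$, so a symmetric covering near $a$ is not available. The monotonicity-plus-counting argument sidesteps this difficulty by never attempting to localize points near $a$, and instead using only the global constraint that at each time $k$ the ordered images $f^k(y_1) < \cdots < f^k(y_N)$ must fit inside an interval of length $b-a$. Combining the two bounds finally yields $\hp(f) = 1$.
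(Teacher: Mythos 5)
Your proof is correct, and the upper bound is established by a genuinely different mechanism than the one in the paper. The lower bound (wandering point, Proposition \ref{pointerrant}) is identical. For the upper bound, the paper constructs an explicit cover of $[a,b]$: it isolates the forward-invariant fringe $[b-\eps,b]$, subdivides the middle part $J=[a+\eps,b-\eps]$ into $q$ pieces of size controlled by uniform continuity of $f,\dots,f^{n_0}$ (where $n_0$ is the time for $f^{n_0}(a+\eps)$ to enter $[b-\eps,b]$), and then covers $[a,a+\eps]$ by the $n$ successive pullbacks $f^{-k}([a+\eps,f(a+\eps)])$, each of which inherits $d_N^f$-diameter at most $\eps$ from the $J_j$'s; this produces a cover of cardinality $np+q+2$. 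Your argument instead bounds the cardinality $N$ of an $(n,\eps)$-separated set directly: assign to each consecutive gap $(y_i,y_{i+1})$ a separation time $k_i$, use monotonicity of $f^{k_i}$ to see that the intervals $[f^{k_i}(y_i),f^{k_i}(y_{i+1})]$ with a fixed value $k_i=k$ have disjoint interiors of length at least $\eps$ inside $[a,b]$, and sum over $k$ to get $N-1\leq n(b-a)/\eps$. This is cleaner and strictly more general: it uses only that $f$ is increasing and maps $[a,b]$ into itself, never the fixed-point hypotheses or the sign condition on $f-x$, so it proves at one stroke that \emph{any} increasing continuous self-map of a compact interval has $\hp\leq 1$. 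What the paper's covering buys is an explicit family of covering sets with the stated $d_N^f$-diameter bound uniformly in $N$, which is then reused verbatim in the proof of Proposition \ref{rationel}; your separated-set bound delivers the same asymptotic via the standard inequality $D_n^f(2\eps)\leq G_n^f(\eps)\leq S_n^f(\eps)$, but without the explicit cover.
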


\begin{proof}
Changing $f$ by $f\inv$ is necessary, we can assume without loss of generality that $f(x)>x$ for all $x\in\,]a,b[$.
We first observe that any $x\in\,]a,b[$ is wandering. Therefore by Proposition \ref{pointerrant}, $\hp(f)\geq 1$.
We will now prove that $\hp(f)\leq 1$. Given $\eps>0$ we will construct for $n$ large enough a majoration of the form $D_n^f(\eps)\leq cn+d$, where $c$ and $d$ are positive numbers independent of $n$.
Fix $0<\eps<\frac{b-a}{2}$. 
We introduce the intervals $I_0=[a,a+\eps]$, $I_1=[b-\eps,b]$ and $J=[a+\eps, b-\eps]$. 
\vskip 2mm 

\noindent $\bullet$ \textit{Cover of} $I_1$: Observe that for all $n\in \N$, $f^n(I_1)=I_1$, so $D_n^f(I_1,\eps)=1$.

\vskip 2mm  

\noindent $\bullet$ \textit{Cover of} $J$: Since $\lim_{n\rit \infty}f^n(a+\eps)=b$, there exists $n_0$ such that for $n\geq n_0$, $f^{n_0}(a+\eps)\in I_1$. 
So for $n\geq n_0$, $f^n(J)\subset I_1$ and in particular $f^n(J)$ has diameter less than $\eps$.
Now, $f, f^2,\dots, f^{n_0}$ are uniformly continuous, so there exists $\eta>0$ such for all $(x,y)\in [a,b]^2$ if $|x-y|\leq \al$, then, for all $1\leq k \leq n_0$,  $|f^k(x)-f^k(y)|\leq \eps$.
We divide $J$ in a finite number $q$ of subintervals $J_1,\dots, J_q$ with diameter $\al$. Observe that $q$ depends only on $\eps$. 
Now when $n\geq 1+n_0$, $f^n(J_k)\subset I_1$ so $f^n(J_k)$ has diameter less than $\eps$. Therefore, we get a covering of $J$ with $q$ sets of $d_n^f$-diameter less than $\eps$ for all $n\in \N$.

\vskip 2mm  

\noindent $\bullet$ \textit{Cover of} $I_0$. Fix $n\geq 1$. First, observe that the interval $[a+\eps,f(a+\eps)]$ is covered by some intervals of the previous family, say $J_1,\ldots,J_p$, with $1\leq p\leq q$. 
Thus the interval $[f\inv(a+\eps),a+\eps]$ is covered by $f\inv(J_j)$, $1\leq j\leq p$.
By construction, for any $N\in \N$, $f^N(f\inv(J_j))$ has diameter less than $\eps$, so $[f\inv(a+\eps),a+\eps]$ can be covered 
by  $p$ sets with $d_N^f$-diameter less than $\eps$ for any $N\in \N$. 

By the same argument, taking the pullbacks of order $k$, $1\leq k\leq n$ of $[a+\eps,f(a+\eps)]$, we obtain a cover of each interval 
$[f^{-(k+1)}(a+\eps),f^{-k}(a+\eps)]$ by $p$ balls. 
Thus we get a covering of $[f^{-n}(a+\eps),a+\eps]$ by $np$ sets of $d_N^f$-diameter less than $\eps$ for any  $N \in \N$. In particular, $D_n^f([f^{-n}(a+\eps),a+\eps],\eps) \leq np$.

It only remains to cover the interval $[a,f^{-n}(a+\eps)]$. For $x\in [a,f^{-n}(a+\eps)]$ and $1\leq k \leq n$, $f^k(x)\leq f^k(f^{-n}(a+\eps))\leq \eps$, that is, $f^k([a,f^{-n}(a+\eps)])\subset [0,\eps]$ and $D_n^f([a,f^{-n}(a+\eps)],\eps)=1$.

Gathering these three results, we get a covering of $I_0$ with $np+1$ sets of $d_n^f$-diameter less than $\eps$.

\vskip 2mm  

Hence, for $n\in \N^*$, we obtain a cover of $[a,b]$ by $np+q+2$ sets of diameter $\eps$ for $d_n^f$, with $p$ and $q$ independent of $n$. This proves that
$
D_n^f(\eps)\leq np+q+2,
$
and therefore that $\hp(f)\leq 1$. So $\hp(f)=1$.
\end{proof}

%

\begin{proof}[Proof of proposition \ref{rationel}]
Let $(p,q)\in \Z\times \N^*$ with $p\wedge q=1$ be such that $\rho(f)=\pq$. By property \ref{ptehphw} (4), it suffices to compute $\hp(f^q)$.

Let $F$ be the lift of $f$ to $\R$ such that for all $x\in \pi\inv (\Per(f)), F^q(x)=x+p$ and let $\wti G=F^q-p$. The function $\wti G$ is a lift of $f^q$ and $\pi\inv(\Per(f))=\Fix \wti G=\pi\inv(\Fix (f^q))$.

Fix $\bz_0 \in \Per(f)$, fix $z_0\in \pi\inv(\{\bz_0\})$ and consider the restriction $G$ of $\wti G$ to the interval $[z_0, z_0+1]$.
Set $P:=\Fix G= \Fix \wti G\cap [z_0, z_0+1]$, it is a compact subset of $[z_0, z_0+1]$. We set $P:=\{z_i\,|\,i\in \II\}$ where $\II\subset \R$.

Let $\Ii$ be a connected component of $\T\setm \Per(f)$ with closure $\ov{\Ii}$. Set $I=\pi\inv(\Ii)\cap [z_0, z_0+1]$. Then $\ov{I}:=\pi\inv(\ov{\Ii})\cap [z_0, z_0+1]$ and $\pi: \ov{I}\rit \ov{\Ii}$ is an isometry that conjugates $G$ and $f^q$.
Now $G$ is  increasing on $I$,  fixes its extremities and $G-\Id$ never vanishes on $\overset{\circ}{\Ii_k}$. In particular, for all $x\in \overset{\circ}{I}$, $x$ is wandering for $G$, and therefore $\pi(x)$ is wandering for $f^q$. Then by Proposition \ref{pointerrant}, $\hp(f^q)\geq 1$.

We will now prove that $\hp(f^q)\leq 1$.  
For this, given $\eps>0$ we will construct for $n$ large enough a majoration of $D_n^{f^q}(\eps)$ of the form $D_n^{f^q}(\eps)\leq cn+d$, where $c$ and $d$ are positive numbers independent of $n$.
Let $\eps>0$. 
Since $P$ is compact, there exists  a finite subset $\{z_{i_0},\dots, z_{i_\ka}\}$ of $P$ such that the closed balls $B_k:=B(z_{i_k},\frac{\eps}{2})$ cover $P$.
Let $\Bb_k=\pi(B_k)$. The sets $\Bb_k$ cover $\Per(f)$ and have diameter less than $\eps$. 

Fix $k\in\{1,\dots, \ka\}$ and set $z_m^k=\min B_k\cap P$ and $z_M^k=\max B_k\cap P$.
We set $J_k:=[z_m^k, z_M^k]$ and $\Jj_k=\pi(J_k)$. 
Observe that $J_k$ is invariant by $G$, that $\Jj_k$ is invariant by $f^q$ and that $\pi : J_k\rit \Jj_k$ is an isometry that conjugates $G$ and $f^q$.
We choose the indices such that for $0\leq k \leq \ka-1$, $z_M^k <z_m^{k+1}$. 
In particular, $z_m^0=z_0$ and $z_M^\ka=z_0+1$.

For $1 \leq k \leq \ka$, we set $I_k:=]z_M^{k-1},z_m^k[$ and  $\Ii_k=\pi(I_k)$. We denote by $\ov{I_k}$ and $\ov{\Ii_k}$ their respective closure.
\vspace{0.2cm}

\noindent $\bullet$ \textit{Cover of $\Jj_k$:} Since the projection $\pi : J_k\rit \Jj_k$ is an isometry that conjugates $G$ and $f^q$ it suffices to find a cover of $J_k$ by sets with $d_n^G$--diameter less than $\eps$.
By construction, $\diam J_k\leq \eps$. 
Now $J_k$ is invariant by $G$, so for any $n\in \N$, $G^n(J_k)$ has diameter less than $\eps$.
Therefore, $D_n^{f^q}(\Jj_k,\eps)=D_n^G(J_k,\eps)=1$.
\vspace{0.2cm}

\noindent $\bullet$ \textit{Cover of $\ov{\Ii_k}$:} As before,  it suffices to find a cover of $I_k$ by sets with $d_n^G$--diameter less than $\eps$.
Observe each $\Ii_k$ is a connected component of $\T\setm\Per(f)$, so we have already seen  that the restriction of $G$ to $\ov{\Ii_k}$ satisfies the hypotheses of Proposition \ref{prop:indseg}.
 Therefore, using  the proof of proposition \ref{prop:indseg}, one sees that there exists two postive numbers $c_k, d_k$ independent of $n$ such that $D_n^{f^q}(\Ii_k,\eps)\leq D_n^{f^q}(\ov{\Ii_k},\eps)=D_n^G(\ov{I_k},\eps)\leq c_kn +d_k$
\vspace{0.2cm}

Set $c:=\sum_{j=1}^k c_j$ and $d=\sum_{j=1}^k d_j$. Then, $D_n^{f^q}(\eps)\leq cn +d +\ka$ which yields $\hp(f^q)\leq 1$. Therefore, $\hp(f^q) =1$.
\end{proof}


\subsection{Homeomorphisms with irrationnal rotation number}

Let $f\in \Hom^+(\T)$ with irrational rotation number. One proves that the $\al$ and $\om$--limit sets of $\bar{x}\in \T$ do not depend on $\bar{x}$. We denote by $\Om$ this set. It is a closed set invariant by $f$. Moreover, one proves that $\Om$ is the set of nonwandering points of $f$.
Finally, $f$ is conjugate to an irrational rotation if and only if  $\Om=\T$. Otherwise, $\Om$ is a Cantor set of $\T$ and any connected component of $\T\setm \Om$ is a wandering interval of $\T$.

\begin{prop}\label{irrationel}
Let $f\in \Hom^+(\T)$ such that $\rho(f)\in \R\setm \Q$. Assume that $f$ is not conjugate to a rotation. Then 
$
 \hp(f)=1.
$
\end{prop}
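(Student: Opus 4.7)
\emph{Lower bound.} Since $f$ is not conjugate to a rotation, the paragraph preceding the proposition tells us that $\Om\subsetneq\T$ is a Cantor set and that each component of $\T\setm\Om$ is a wandering interval. I would invoke Proposition~\ref{pointerrant} directly: any point in a wandering interval is a wandering point for $f$, so $\hp(f)\geq 1$.

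\emph{Upper bound.} I plan to show the stronger estimate $\hp(f)\leq 1$ for every $f\in\Hom^+(\T)$, by a pullback-and-refine construction that does not distinguish between rational and irrational rotation numbers. Fix $\eps>0$ and set $N:=\lceil 1/\eps\rceil$. First, I would choose a partition $\Pp$ of $\T$ into $N$ closed arcs of length $\leq\eps$. For each $0\leq k\leq n-1$, the pull-back $\mathcal{Q}_k:=f^{-k}(\Pp)$ is then a partition of $\T$ into $N$ arcs with $N$ boundary points, since $f^{-k}\in\Hom^+(\T)$. I would let $\Rr_n$ denote the subdivision of $\T$ obtained by cutting at all boundary points of $\mathcal{Q}_0,\ldots,\mathcal{Q}_{n-1}$: it comprises at most $nN$ arcs. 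Each arc $R\in\Rr_n$ lies inside a single piece of each $\mathcal{Q}_k$, so $f^k(R)$ is contained in a piece of $\Pp$ and therefore has diameter $\leq\eps$ for every $0\leq k\leq n-1$. Consequently $\diam_{d_n^f}(R)\leq\eps$ and $D_n^f(\eps)\leq nN=n\lceil 1/\eps\rceil$, which yields $\hp(f)\leq 1$.

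\emph{The main obstacle.} The argument is essentially combinatorial: the only nontrivial point is to recognize that cutting $\T$ by the $nN$ boundary points of $\mathcal{Q}_0,\ldots,\mathcal{Q}_{n-1}$ produces only $nN$ arcs (rather than the naive $N^n$), because on the circle each new partition contributes just $N$ new boundary points. This bypasses any delicate discussion of the Cantor set $\Om$ or of the orbits of wandering intervals under $f$; in contrast with the rational case (Proposition~\ref{rationel}), where the invariance of the components of $\T\setm\Per(f)$ under $f^q$ was used jointly with Lemma~\ref{prop:indseg}, here the argument is uniform in $\rho(f)$.
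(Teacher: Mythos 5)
Your lower bound is identical to the paper's: both invoke the wandering-interval description of $\T\setm\Om$ together with Proposition~\ref{pointerrant}. Your upper bound, however, is a genuinely different and in fact considerably simpler argument than the paper's, which builds an explicit cover of $\T$ adapted to the Cantor set $\Om$: it covers $\Om$ by finitely many small arcs $\Bb_k$, covers the $p+1$ "large'' wandering gaps $I_k$ using uniform continuity and the eventual absorption of wandering intervals into $\BB$, and then tracks the $\leq(p+1)$ wandering intervals $\Jj^j$ that escape into $\II$ at each step $j\leq n$ to bound the remaining pieces. Your argument bypasses all of this: you observe that on the circle the refinement $\bigvee_{k=0}^{n-1}f^{-k}(\Pp)$ of an $N$-element arc partition has at most $nN$ atoms, because each pulled-back partition $f^{-k}(\Pp)$ contributes only $N$ new cut points and $m$ points on $\T$ determine exactly $m$ arcs, and that each atom has $d_n^f$-diameter $\leq\eps$ by construction. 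This is the classical one-dimensionality argument used to show that circle homeomorphisms have zero topological entropy, here read off at the polynomial scale; it gives $\hp(f)\leq 1$ uniformly for all $f\in\Hom^+(\T)$, so it subsumes the upper bound of Proposition~\ref{rationel} (and renders Lemma~\ref{prop:indseg} unnecessary for that purpose) as well. What the paper's more laborious route buys in exchange is a geometric picture of exactly where the linear growth is localized (the chains of wandering intervals accumulating on $\Om$), which is closer in spirit to the torus arguments in Section~4; but as a proof of the stated proposition, your version is strictly shorter, correct, and more general.
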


\begin{proof}
We first observe that if $f$ is not conjugate to a rotation then it admits wandering points. Then $\hp(f)\geq 1$. 
Let us prove that $\hp(f)\leq 1$. Again, for $\eps>0$ fixed and $n$ large enough, we will prove that $D_n^f(\eps) \leq cn+d$ with $c, d$ independent of $n$.
We denote by $\pi$ the canonical projection $\pi :[0,1]\rit \T$, it is continuous and injective over $[0,1[$. 
We denote by $\II:=\T\setm \Om$ the complementary of  the  Cantor set $\Om$ of nonwandering points of $f$.
We can assume without loss of generality that $\pi(0)\in \II$.
We set $\wti{\Om}:=\pi\inv(\Om)$ and $\wti\II:=\pi\inv(\II)$.

Fix $\eps>0$ such that $\eps<\demi$. We begin by constructing a cover of $\Om$ by pairwise disjoint connected sets with diameter less than $\eps$.
To do this, we construct a finite increasing sequence $a_1<\dots<a_{p-1}<a_p$ of points of $\wti{\II}$ in the following way.
We set $a_1=0$.
Since $\wti{\Om}$ is totally disconnected,  the intersection $]\ee, \eps[\,\cap \wti{\II}$ is nonempty, and  $a_2$ is chosen to be an arbirary point in $]\ee, \eps[\,\cap \wti{\II}$.
Similarly, we choose $a_3$ such that $a_3\in\,]a_2 +\ee, a_2+\eps[\,\cap \wti \II$.
By a recursive process we construct $a_{k+1}$ such that $a_{k+1}\in \,]a_k +\ee, a_k +\eps[\,\cap \wti{\II}$. 

By construction, the intervals $[a_k,a_{k+1}]$ have length more than $\ee$. So there exists a minimal $p\in \N$ such that $[0,1]\subset \cup_{k=1}^p[a_k,a_{k+1}]$.

For $1\leq k\leq p$, the sets $\wti\Om\cap [a_k,a_{k+1}]$ are compact. This allows us to set $\om_m^k:=\min (\wti\Om\cap [a_k,a_k+1])$ and $\om_M^k:=\max (\wti\Om\cap [a_k,a_k+1])$. 
Setting, $B_k:=[\om_m^k,\om_M^k]$, the family  $(B_k)_{1\leq k\leq p}$ is a finite cover of $\wti\Om$ by pairwise disjoint connected sets with diameter less than $\eps$. 
Then, setting $\Bb_k:= \pi([\om_m^k,\om_M^k])$, we get a cover of $\Om$ with the desired form.

We set $I_0:=\pi([0,\om_1^m[\,\cup\,]\om_p^M,1])$ and $I_k:=\pi(]\om_k^M, \om_{k+1}^m[)$, for $1\leq k \leq p$. So for $0\leq k \leq p$, the intervals $I_k$ are connected components of $\II$.
Set $\BB:=\cup_{k=1}^{p}\Bb_k$ and $\II:=\cup_{k=0}^{p} I_k$.
\vspace{0.2cm}

\noindent $\bullet$ \textit{Cover of $\II$:} Since each $I_k$ is wandering, there exists $n_0$, such that for all $N\geq n_0$ and all $0\leq k \leq p$, $f^N(I_k)\subset \BB$. Since $I_k$ is connected, for all $N\geq n_0$, $f^N(I_k)$ is contained in one of the intervals $\Bb_1,\dots, \Bb_p$, so for $N\geq n_0$, $f^N(I_k)$ has diameter less than $\eps$.

Now, $f, f^2,\dots, f^{n_0}$ are uniformly continuous, so there exists $\al>0$ such that for all $(x,y)\in \T^2$ if $d(x,y)\leq \al$, then  $d(f^N(x),f^N(y))\leq \eps$ for all $1\leq N \leq n_0$.
For $1\leq k \leq p$, we divide $I_k$ in a finite number $m_k$ of subintervals $I_k^1,\dots, I_k^{m_k}$ with length $\al$. Observe that each $m_k$ depends only on $\eps$. 
Setting  $m:=\max\{m_k\,|\, 1\leq k \leq q\}$,  we get a cover of $\II$ by at most $m(p+1)$ sets with $d_N$-diameter less than $\eps$ for any $N\in \N$. 

\vspace{0.2cm}

\noindent $\bullet$ \textit{Cover of $\BB$:} The set $\BB$ is the disjoint union of $\Om$ and of a countable union of wandering intervals $(J_i)_{i\in \N}$ with diameter less than $\eps$.
Fix $n\geq n_0$.

There exists  at most $p+1$ wandering intervals $J$ contained in $\Bb$ such that $f(J)\subset \II$. 
We denote them by $J_{i_1}^1,\dots, J_{i_{q_1}}^1$, with $q_1\in \{0,\dots,p\}$.
For each $i_j$, there exists $k\in\{0,\dots,p\}$ such that $f(J_{i_j}^1)=I_k$.
So $J_{i_j}=\cup_{\ell=1}^{m_k}f\inv(I_k^{\ell})$ and for $0\leq N \leq n_0+1$, $f^N(f\inv(I_k^{\ell}))$ has diameter less than $\eps$.
On the other hand, for all $N \geq n_0+1$, $f^N(f\inv(I_k^{\ell}))\subset f^N(J_{i_j}^1)\subset \Bb$ and by the same connectedness argument as before $f^N(f\inv(I_k^{\ell}))$ has diameter less than $\eps$.
This way, we get a cover of $J_{i_j}$ by at most $m_k\leq m$ sets with $d_N$-diameter less than $\eps$ for any $N\in \N$. So the union $\Jj^1$ of these intervals is covered by at most $m(p+1)$ sets  with $d_N$-diameter less than $\eps$ for any $N\in \N$

Similarly there exists at most $p+1$ wandering intervals $J$ contained in $\BB$ such that $f^2(J)\subset \II$ and $f(J)\subset \BB$. 
We denote by $\Jj^2$ their union.
The domains $\Jj_1$ and $\Jj_2$ are disjoint.
As before, each wandering interval $J$ contained in $\Jj^2$ is covered by the intervals $f^{-2}(I_k^{\ell}), 1\leq \ell \leq m_k$, for some $k$ and one checks that for all $N\in \N$, $f^N(f^{-2}(I_k^{\ell}))$ has diameter less than $\eps$.
Therefore we get again a cover of $\Jj^2$ by at most $m(p+1)$ sets with $d_N$-diameter less than $\eps$ for any $N\in \N$.

In general, for $n\in \N$, there exists at most $p+1$ wandering intervals $J$ contained in $\Bb$ such that $f^n(J)\in \Ii$ and $f^{\ell}(J)\subset \Bb$ when $0\leq \ell\leq n-1$. 
We denote by $\Jj^n$ their union.
Again, considering the intervals $f^{-n}(I_k^\ell)$ for $0\leq k \leq p$ and $1\leq \ell \leq m_k$,  we get a cover of $\Jj^n$ by at most $m(p+1)$ sets with $d_N$-diameter less than $\eps$ for any $N\in \N$.

Fix $n\in \N$. Set $\JJ:=\cup_{k=1}^n \Jj^k$. By the previous study, we get a cover of $\JJ$ by at most $nm(p+1)$ sets with $d^N$--diameter less than $\eps$ for all $N\in \N$. 


It remains to cover $\Bb\setm \JJ$. Let $C$ be a connected component of $\BB\setm \JJ$. 
Then, by definition of $\JJ$, for all $N\leq n$, $f^N(C)\subset \Bb$ and is therefore contained in one of the intervals $\Bb_k$. 
Therefore, $C$ has $d_n$-diameter less than $\eps$.
Now, $\JJ$ has at most $n(p+1)$ connected components, so $\BB\setm \JJ$ has at most $(n(p+1)+1)p$ connected components.
Thus we get a cover of $\BB\setm \JJ$ by at most $(n(p+1)+1)p$ sets with $d_n$-diameter less than $\eps$.

The previous cover yields $D_n^f(\eps)\leq  n((p+1)(m +p))+ p +m(p+1)$, with $m,p,q$ depending only on $\eps$. This yields $\hp(f)\leq 1$.
\end{proof}

\section{$C^1$ nonvanishing vector fields on $\T^2$}

Let $X :\T^2\rit \R^2$ be a $C^1$ vector field on $\T^2$ that does not vanish. 
To compute the polynomial entropy of its flow, we discriminate between the cases where there are no periodic orbits (Proposition \ref{without} in Section 4.2) and when there are periodic orbits (Proposition \ref{periodicorbit} in Section 4.3).
We begin with a lemma of comparison of solutions of some differential equations  that will be useful for the estimation of the polynomial entropy. 

\subsection{A $L^1$ comparison lemma}

\begin{lem}[$L^1$ comparison lemma]\label{lemmecomparaisonL1}
Let $I=[\al,\be]\subset\R$ and consider two continuous functions $f,g: I\rit \R$ such that there exist
$m<M$ in $\R^{+*}$ with
\begin{itemize}
\item $m\leq f(x) \leq M$ and $m\leq g(x) \leq M$ for all $x\in \R$,
\item $\max(\norm{f-g}_{L_1(I)},\norm{f-g}_{\infty})<m$
\end{itemize}
where $\norm{\ }_{L^1(I)}$ and $\norm{\ }_\infty$ stand for the usual norms on $I$.
Consider the following differential equations:
\[
 (E_1): \dot{x}= f(x) \quad {\rm{and}} \quad (E_2) : \dot{x}=g(x).
\]
Fix $x_0\in I$ and let $\ga$ and $\eta$ be the solutions of $(E_1)$ and $(E_2)$ with inital conditions $\ga(0)=\eta(0)=x_0$. 
Then $\ga$ and $\eta$ are defined on subintervals $J_\ga$ and $J_\eta$ of $I$ respectively and for all 
$t\in J_\ga\cap J_\eta$:
\[
|\ga(t)-\eta(t)|\leq \frac{M}{m^2}\norm{f-g}_{L_1(I)}.
\]
\end{lem}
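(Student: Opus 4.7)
The key point is that $f$ and $g$ are both bounded below by a strictly positive constant $m$, so the solutions $\gamma$ and $\eta$ are strictly increasing and therefore invertible on their domains. The plan is to pass to inverse functions, which turns the ODE problem into a direct $L^1$ comparison of integrands.

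First I would write the inverse ``time functions''
\[
T_f(x)=\int_{x_0}^{x}\frac{du}{f(u)},\qquad T_g(x)=\int_{x_0}^{x}\frac{du}{g(u)},
\]
defined for $x\in I$. Since $\dot\ga=f(\ga)\geq m>0$ and $\dot\eta=g(\eta)\geq m>0$, the functions $\ga$ and $\eta$ are $C^1$ diffeomorphisms from $J_\ga$ (resp.\ $J_\eta$) onto their images, with inverses $T_f$ and $T_g$ respectively. In particular, for any $t\in J_\ga\cap J_\eta$, setting $x:=\ga(t)$ and $y:=\eta(t)$ we have $T_f(x)=T_g(y)=t$.

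Second, I would use this identity to rewrite the difference. Assume for instance $x\leq y$ (the other case is symmetric). From $T_f(x)=T_g(y)$ we get
\[
\int_{x}^{y}\frac{du}{g(u)}=\int_{x_0}^{x}\frac{du}{f(u)}-\int_{x_0}^{x}\frac{du}{g(u)}=\int_{x_0}^{x}\frac{g(u)-f(u)}{f(u)g(u)}\,du.
\]
The right-hand side is bounded in absolute value by $\tfrac{1}{m^2}\|f-g\|_{L^1(I)}$ thanks to the lower bound $f,g\geq m$. The left-hand side, using the upper bound $g\leq M$, satisfies
\[
\int_{x}^{y}\frac{du}{g(u)}\geq \frac{y-x}{M}=\frac{|\ga(t)-\eta(t)|}{M}.
\]
Combining both inequalities gives the desired estimate $|\ga(t)-\eta(t)|\leq \tfrac{M}{m^2}\|f-g\|_{L^1(I)}$.

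The step I expect to need the most care is just the bookkeeping of the domains of definition: one must check that $J_\ga$ and $J_\eta$ are honest subintervals of $I$ (which follows from $f,g>0$ and the compactness of $I$), and that $x=\ga(t)$ and $y=\eta(t)$ both lie in $I$ so that the integrals above make sense. The hypothesis that $\max(\|f-g\|_{L^1(I)},\|f-g\|_\infty)<m$ plays no essential role in the main inequality itself; it is really there to guarantee that the two flows remain comparable on a common subinterval (in particular that the bound $\tfrac{M}{m^2}\|f-g\|_{L^1(I)}$ is strictly smaller than $|I|$, so that $y\in I$ whenever $x\in I$), which is exactly what one needs to apply the lemma in the sequel.
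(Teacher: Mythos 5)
Your proof is correct and uses the same core strategy as the paper: pass to the inverse time functions $T_f$, $T_g$ (the paper calls them $F$ and $G$, the primitives of $1/f$, $1/g$ vanishing at $x_0$), equate them at $x=\gamma(t)$, $y=\eta(t)$, and compare. Your execution is cleaner, though. Where you use the exact identity $\frac{1}{f}-\frac{1}{g}=\frac{g-f}{fg}$, the paper instead writes $\frac{1}{g}=\frac{1}{f(1+\Delta/f)}$ (resp.\ $\frac{1}{f}=\frac{1}{g(1-\Delta/g)}$) with $\Delta=g-f$, and invokes the pointwise inequality $\frac{1}{1+a}\geq 1-a$; that expansion is what prompts the paper to assume $\|f-g\|_\infty<m$. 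You are right that this hypothesis plays no role in the main estimate, and your direct computation makes that transparent. Your systematic use of absolute values when bounding $\int_{x_0}^{x}\frac{g-f}{fg}\,du$ is also more careful than the corresponding chain in the paper, which tacitly treats the integrand as having a fixed sign on the relevant interval. Finally, the reduction to the case $x\leq y$ is legitimate because the claimed bound $\frac{M}{m^2}\|f-g\|_{L^1(I)}$ is symmetric in $f$ and $g$.
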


\begin{proof}
Let $F$ and $G$ be the primitives of $\frac{1}{f}$ and $\frac{1}{g}$ that vanish at $x_0$. 
Since $\frac{1}{f} \geq \frac{1}{M}$ and  $\frac{1}{g}\geq \frac{1}{M}$ with $\frac{1}{M}>0$, 
$F$ and $G$ are $C^1$ diffeomorphisms
on their images $J_\ga$ and $J_\eta$, which moreover satisfy  $F\inv=\ga$ and $G\inv=\eta$.
Indeed, for $t\in \R$,
$
(F\inv)'(t)=\frac{1}{F'(F\inv(t))}=f(F\inv(t)), 
$
and similaraly
$
(G\inv)'(t)=\frac{1}{G'(G\inv(t))}=g(G\inv(t)), 
$
and
$F\inv(0)=G\inv(0)=x_0$ by definition. 
Therefore, for $t\in J_\ga\cap J_\eta$:
\begin{equation}
t=\int_{x_0}^{\ga(t)} \frac{du}{f(u)}=\int_{x_0}^{\eta(t)} \frac{du}{g(u)}.
\end{equation}
Set $\De:= g-f$.
Then 
\begin{equation}\label{egalitet1}
\int_{x_0}^{\ga(t)} \frac{du}{g(u)-\De(u)}=\int_{x_0}^{\ga(t)} \frac{du}{f(u)}=\int_{x_0}^{\eta(t)} \frac{du}{g(u)}=\int_{x_0}^{\eta(t)} \frac{du}{f(u)+\De(u)}.
\end{equation}
Fix $t\in J_\ga\cap J_\eta$ and assume first that $\eta(t)\geq \ga(t)$. By (\ref{egalitet1}), one has
\begin{equation}\label{egalite2}
\int_{x_0}^{\ga(t)}\frac{du}{f(u)}=\int_{x_0}^{\eta(t)} \frac{du}{f(u)\left(1+\frac{\De(u)}{f(u)}\right)}.
\end{equation}
Since $\norm{f-g}_{\infty}<m$, then $\left|\frac{\De(u)}{f(u)}\right|< 1$, and in particular $\frac{\De(u)}{f(u)}>-1$, so  
\[
\frac{1}{1+\frac{\De(u)}{f(u)}}\geq 1-\frac{\De(u)}{f(u)}.
\]
Hence
\[
\int_{x_0}^{\ga(t)}\frac{du}{f(u)}=\int_{x_0}^{\eta(t)}\frac{du}{f(u)\left(1+\frac{\De(u)}{f(u)}\right)}\geq \int_{x_0}^{\eta(t)}\frac{du}{f(u)}-\int_{x_0}^{\eta(t)}\frac{\De(u)}{f(u)^2}du.
\]
This yields
\begin{multline}
0 \leq \int_{\ga(t)}^{\eta(t)}\frac{du}{f(u)}\leq \int_{x_0}^{\eta(t)}\frac{\De(u)}{f(u)^2}\leq \frac{1}{m^2}\int_{x_0}^{\eta(t)}\De(u)du\\
=\frac{1}{m^2}\left|\int_{x_0}^{\eta(t)}\De(u)du\right|\leq \frac{1}{m^2}||\De||_{L_1(I)}.
\end{multline}
On the other hand,
$
\int_{\ga(t)}^{\eta(t)}\frac{du}{f(u)} \geq (\eta(t)-\ga(t))\frac{1}{M},
$
so 
\[
\eta(t)-\ga(t)\leq \frac{M}{m^2} ||\De||_{L_1(I)}.
\]

Assume now that $\eta(t) \leq \ga(t)$. By (\ref{egalitet1}), one has
\begin{equation}\label{egalite2}
\int_{x_0}^{\eta(t)} \frac{du}{g(u)}=\int_{x_0}^{\ga(t)} \frac{du}{g(u)\left(1-\frac{\De(u)}{g(u)}\right)}.
\end{equation}
As before, $\left|\frac{\De(u)}{f(u)}\right|< 1$, and in particular $\frac{\De(u)}{f(u)}<1$, so   
\[
\frac{1}{1-\frac{\De(u)}{g(u)}}\geq 1+\frac{\De(u)}{g(u)}.
\]
Hence,
\[
\int_{x_0}^{\eta(t)}\frac{du}{g(u)}=
\int_{x_0}^{\ga(t)}\frac{du}{g(u)\left(1-\frac{\De(u)}{g(u)}\right)}du
\geq \int_{x_0}^{\ga(t)}\frac{du}{g(u)}+\int_{x_0}^{\ga(t)}\frac{\De(u)}{g(u)^2}du.
\]
As before, this yields
\begin{multline}
0 \leq \int_{\eta(t)}^{\ga(t)}\frac{du}{g(u)}\leq \int_{x_0}^{\ga(t)}\frac{-\De(u)}{f(u)^2}\leq \frac{1}{m^2}\int_{x_0}^{\ga(t)}-\De(u)du\\
=\frac{1}{m^2}\left|\int_{x_0}^{\ga(t)}-\De(u)du\right|\leq \frac{1}{m^2}||\De||_{L_1(I)}.
\end{multline}
Now, as before 
$
\int_{\eta(t)}^{\ga(t)}\frac{du}{g(u)} \geq (\ga(t)-\eta(t))\frac{1}{M},
$
so 
\[
\ga(t)-\eta(t)\leq \frac{M}{m^2}\norm{\De}_{L_1(I)},
\]
which concludes the proof.
\end{proof}

Consider now a bounded Lipschitz vector field $X=(X_1, X_2)$ on $\R^2$ with  flow $\phi_X$.
Set $\mu_M:=\max_{\R^2}  X_1$ and $\mu_m:=\min_{\R^2} X_1$.
We moreover assume that $\mu_m>0$.

\begin{lem}\label{orbitegraphe}
The orbits of $\phi_X$ are graphs over the $x$-axis.
\end{lem}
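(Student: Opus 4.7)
The plan is to exploit the sign condition $X_1 \geq \mu_m > 0$ to show that the first coordinate along any orbit is a $C^1$-diffeomorphism of $\R$, so that the second coordinate becomes a function of the first.

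First I would observe that since $X$ is bounded and Lipschitz on $\R^2$, the Cauchy-Lipschitz theorem guarantees that every maximal solution of $\dot{u} = X(u)$ is defined on all of $\R$; hence $\phi_X$ is a complete flow. Fix a point $p_0 = (x_0, y_0)\in \R^2$ and write the orbit as $t \mapsto (x(t), y(t)) := \phi_X^t(p_0)$.

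Next I would look at the first coordinate. By definition $\dot{x}(t) = X_1(x(t), y(t))$, and by hypothesis $\mu_m \leq \dot{x}(t) \leq \mu_M$ for every $t\in\R$. Integrating between $0$ and $t$ one obtains
\[
x_0 + \mu_m t \leq x(t) \leq x_0 + \mu_M t \quad \text{for } t \geq 0,
\]
and the reversed inequalities for $t\leq 0$. In particular $x(t)\to +\infty$ as $t\to +\infty$ and $x(t)\to -\infty$ as $t\to -\infty$, while $\dot{x}(t) \geq \mu_m > 0$ shows that $t \mapsto x(t)$ is strictly increasing. Being $C^1$ with everywhere positive derivative and onto, $t\mapsto x(t)$ is a $C^1$ diffeomorphism from $\R$ onto $\R$.

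Finally, letting $\tau : \R \rit \R$ denote its inverse and setting $Y(x) := y(\tau(x))$, the orbit through $p_0$ is exactly the graph $\{(x, Y(x)) : x\in \R\}$. Since $p_0$ was arbitrary, every orbit is a graph over the $x$-axis. There is no real obstacle here: the only point that requires care is the global existence of the flow, which is immediate from the boundedness and Lipschitz hypothesis on $X$.
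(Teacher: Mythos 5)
Your proof is correct and follows essentially the same route as the paper's: completeness of the flow from boundedness, the lower bound $\dot x(t)\ge\mu_m>0$ giving a strictly increasing first coordinate, and the linear growth estimate giving surjectivity onto $\R$, so that $t\mapsto x(t)$ is a diffeomorphism and the orbit is a graph.
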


\begin{proof}
Since $X$ is bounded, $X$ is complete. Let $\ga :\R\rit \R^2$ be a maximal solution of $X$. We set $\ga : t \mapsto (\ga_x(t), \ga_y(t))$. 
It suffices to show that  $t\ma \ga_x(t)$ is a diffeomorphism.
Since $\ga_x'(t)= X_1(\ga(t))\geq \mu_m>0$, $\ga_x$ is injective and is a diffeomorphism on its image.
Now for $t>0$, $\ga_x(t)\geq \mu_m t$ so $\lim_{t\rit +\infty} \ga_x(t)=+\infty$. Similarly, $\lim_{t\rit -\infty} \ga_x(t)=-\infty$, so $\ga_x(\R)= \R$. 
\end{proof}

In the following we consider two functions $\vp, \psi : \R \rit \R$ such that their graphs $G_\vp:=\{(x,\vp(x))\,|\, x\in \R\}$ and $G_\psi:=\{(x,\psi(x))\,|\, x\in \R\}$ are orbits of $X$.
We moreover assume that for all $x\in \R$ $\psi(x) > \vp(x)$.

\begin{Def} For $x_0\in \R$, we set $V_{x_0}(\vp,\psi):= \{(x_0,y)\,|\, y\in [\vp(x_0), \psi(x_0)]\}$.
The \emph{deviation of the vertical $V_{x_0}(\vp,\psi)$ at time $t$}  is defined as
\[
\Dev(V_{x_0}(\vp, \psi),t):= \max_{y\in[\vp(x_0), \psi(x_0)]} |\pi_x(\phi_X^t(x_0, \vp(x_0)))-\pi_x(\phi_X^t(x_0,y))|
\]
where $\pi_x:\R^2\rit \R$ is the canonical projection on the $x$-axis.
\end{Def}

\begin{prop}\label{L1Deviation} 
There exists $C>0$, depending only on $X$, such that for $x_0\in\R$ and
$T>0$,  
\[
\Dev(V_{x_0}(\vp, \psi),T)\leq C\norm{\vp-\psi}_{L^1([0,\mu_M T])}.
\]
\end{prop}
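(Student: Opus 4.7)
The plan is to imitate the proof of Lemma~\ref{lemmecomparaisonL1}, but applied to the whole $1$-parameter family of orbits issuing from the vertical segment $V_{x_0}(\vp,\psi)$, rather than just to two ODEs. First I would use Lemma~\ref{orbitegraphe}: for every $y\in[\vp(x_0),\psi(x_0)]$, the orbit of $\phi_X$ through $(x_0,y)$ is the graph of a function $\chi_y:\R\rit\R$ with $\chi_y(x_0)=y$. Since distinct orbits of a flow cannot cross, these graphs foliate the strip between $G_\vp$ and $G_\psi$, and in particular $\vp(u)\leq \chi_y(u)\leq \psi(u)$ for every $u\in\R$. Writing $\eta_y(t):=\pi_x\circ\phi_X^t(x_0,y)$, the function $\eta_y$ is the unique increasing solution of $\dot{\eta}=X_1(\eta,\chi_y(\eta))$ with $\eta_y(0)=x_0$, and it inverts into the identity
\[
t=\int_{x_0}^{\eta_y(t)}\frac{du}{X_1(u,\chi_y(u))}.
\]

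Next I would mimic the integral manipulation used to prove Lemma~\ref{lemmecomparaisonL1}: equating the above identity for $y$ and for $y_0:=\vp(x_0)$ and assuming, without loss of generality, that $\eta_y(T)\geq \eta_{y_0}(T)$, the difference yields
\[
\frac{\eta_y(T)-\eta_{y_0}(T)}{\mu_M}\leq \int_{\eta_{y_0}(T)}^{\eta_y(T)}\frac{du}{X_1(u,\chi_y(u))}=\int_{x_0}^{\eta_{y_0}(T)}\frac{X_1(u,\chi_y(u))-X_1(u,\vp(u))}{X_1(u,\vp(u))\,X_1(u,\chi_y(u))}\,du.
\]
Using the Lipschitz constant $L$ of $X_1$ in the $y$-variable, the uniform bound $X_1\geq \mu_m$, and the non-crossing inequality $|\chi_y(u)-\vp(u)|\leq |\psi(u)-\vp(u)|$, I would dominate the integrand pointwise by $L|\psi(u)-\vp(u)|/\mu_m^2$. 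The complementary bound $X_1\leq \mu_M$ forces $\eta_{y_0}(T)\leq x_0+\mu_M T$, so the integration window has length at most $\mu_M T$, and one concludes
\[
|\eta_y(T)-\eta_{y_0}(T)|\leq \frac{L\mu_M}{\mu_m^2}\,\norm{\vp-\psi}_{L^1([x_0,x_0+\mu_M T])}
\]
uniformly in $y\in[\vp(x_0),\psi(x_0)]$, so that $C:=L\mu_M/\mu_m^2$, which depends only on $X$, does the job.

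The main obstacle is conceptual rather than computational: one cannot simply quote Lemma~\ref{lemmecomparaisonL1}, because its hypothesis $\norm{f-g}_\infty<m$ is not available as soon as $X_1(\cdot,\vp(\cdot))$ and $X_1(\cdot,\psi(\cdot))$ differ by more than $\mu_m$, which may happen since their spread can reach $\mu_M-\mu_m$. What rescues the argument is the graph structure of the orbits inside the strip bounded by $G_\vp$ and $G_\psi$: the intermediate profiles $\chi_y$ interpolate monotonically between $\vp$ and $\psi$, which replaces the ``small perturbation'' hypothesis of Lemma~\ref{lemmecomparaisonL1} by the uniform pointwise control $|\chi_y-\vp|\leq |\psi-\vp|$ and lets a single $L^1$ quantity absorb the error. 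The mismatch between the interval $[x_0,x_0+\mu_M T]$ naturally produced by the argument and the interval $[0,\mu_M T]$ written in the statement is harmless in the sequel, since $X$ will be the lift to $\R^2$ of a vector field on $\T^2$ and the relevant difference $|\vp-\psi|$ turns out to be periodic in~$x$.
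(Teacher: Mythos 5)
Your argument is correct and follows essentially the same integral manipulation as the paper, which instead invokes Lemma~\ref{lemmecomparaisonL1} directly with $f=X_\vp$, $g=X_f$, $I=[x_0,x_0+\mu_M T]$, $m=\mu_m$, $M=\mu_M$, arriving at the same constant $C=\ka\mu_M/\mu_m^2$. The one genuinely valuable point you raise is the hypothesis mismatch: the comparison lemma as stated requires $\norm{f-g}_\infty<m$, and in the application $\norm{X_\vp-X_f}_\infty\leq\ka\norm{\psi-\vp}_\infty$ can exceed $\mu_m$, so the lemma cannot be quoted verbatim. Your workaround, redoing the integral estimate on the interpolating orbit profiles $\chi_y$ and exploiting $\vp\leq\chi_y\leq\psi$ for the $L^1$ domination, is sound. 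Note however that the worry is ultimately cosmetic: in the proof of Lemma~\ref{lemmecomparaisonL1} the smallness hypothesis is used only to ensure $\left|\De(u)/f(u)\right|<1$, whereas the inequalities actually invoked, $\tfrac{1}{1+x}\geq 1-x$ and $\tfrac{1}{1-x}\geq 1+x$, need only $x>-1$ and $x<1$ respectively, and these follow from the two-sided bounds $m\leq f,g\leq M$ alone; so the comparison lemma holds without that hypothesis and the paper's invocation is justified in substance, if not to the letter. You are also right that the argument naturally produces the window $[x_0,x_0+\mu_M T]$ rather than the $[0,\mu_M T]$ in the statement; the paper performs the same silent substitution, and as you observe it is harmless in the periodic setting where the proposition is ultimately used.
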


\begin{proof} 
This is a consequence of the $L^1$ comparison lemma. We denote by $\OO(\vp, \psi)$ the set of functions $f:\R\rit \R$ such that the graph $G_f$ of $f$ is an orbit of $\phi_X$ and such that $\vp\leq f \leq \psi$.
For $f\in \OO(\vp, \psi)$ we denote by $X_f$ the vector field on $\R$ defined by 
$
X_f(x)= X_1(x,f(x))
$
and by $\ga_f$ the solution of  $\dot{x}= X_f(x)$  with initial condition $x_0$.
By construction $\ga_f(t)=\pi_x(\phi_X^t((x,f(x)))$ and
$
\ga_f(t)\in[x_0,x_0+T\mu_M] 
$
for all $t\in[0,T]$.
Let $\ka$ be the  Lipschitz constant of $X_1$. Then, for $x\in \R$,
\[
\abs{X_\vp(x)-X_f(x)}\leq \ka\abs{\vp(x)-f(x)}.
\]
By the $L^1$ comparison lemma, for $t\in[0,T]$
\[
|\ga_\vp(t)-\ga_f(t)|
\leq \frac{\mu_M}{\mu_m^2}\norm{X_\vp-X_f}_{L^1([0,\mu^* T])}
\leq \ka \frac{\mu_M}{\mu_m^2}\norm{\psi-\vp}_{L^1([0,\mu^* T])}.
\]
This concludes the proof with $C=\ka \frac{\mu_M}{\mu_m^2}$.
\end{proof}

\begin{cor}\label{corollaire} With the same assumptions and notation as in Proposition~\ref{L1Deviation},
there exist positive constants $c_0$, $c_1$ that depend only on $X$ such that for all $\eps>0$ and all $T>0$
if 
\[
\norm{\psi-\vp}_{C^0([0,\mu_MT])}<\frac{\eps}{3},
\]
\[
\norm{\psi-\vp}_{L^1([0,\mu_MT])}\leq c_1\eps,
\]
and 
\[
\abs{x-x'}\leq c_0\eps,
\] 
then the domain 
$D(x,x',\vp, \psi)$ bounded by the verticals $V_x(\vp, \psi), V_{x'}(\vp, \psi)$ and by the graphs $G_\vp$ and $G_\psi$ has 
$d_T^{\phi_X}$-diameter less than $\eps$.
\end{cor}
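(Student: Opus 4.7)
The plan is to bound the $d_T^{\phi_X}$-diameter of $D$ by comparing the flow of any two points $p_1, p_2 \in D$ to the flow along the lower bounding orbit $G_\vp$. Fix $t \in [0,T]$. By Lemma~\ref{orbitegraphe}, the orbit through each $p_i$ is the graph of a unique $f_i \in \OO(\vp,\psi)$, so $p_i = (x_i, f_i(x_i))$ with $x_i \in [x, x']$; set $q_i := \phi_X^t(p_i) = (u_i, f_i(u_i))$ and introduce the shadow points $\bar p_i := (x_i, \vp(x_i)) \in G_\vp$ with images $\bar q_i := \phi_X^t(\bar p_i) = (\bar u_i, \vp(\bar u_i))$. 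I would then apply the triangle inequality
\[
|q_1 - q_2| \le |q_1 - \bar q_1| + |\bar q_1 - \bar q_2| + |\bar q_2 - q_2|
\]
and estimate each piece separately.

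For the two outer terms, $p_i$ and $\bar p_i$ lie on the same vertical segment $V_{x_i}(\vp,\psi)$, so Proposition~\ref{L1Deviation} gives $|u_i - \bar u_i| \le C\,\norm{\vp-\psi}_{L^1} \le C c_1 \eps$. The $y$-component satisfies $|f_i(u_i) - \vp(\bar u_i)| \le \norm{\psi - \vp}_{C^0} + L\,|u_i - \bar u_i| < \eps/3 + L C c_1 \eps$, where $L := \norm{X_2}_\infty/\mu_m$ is a common Lipschitz constant for every $f \in \OO(\vp,\psi)$, since such an $f$ satisfies $f'(u) = X_2(u,f(u))/X_1(u,f(u))$ and $X_1 \ge \mu_m$.

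The middle term is the critical one, and is where I expect the real obstacle to lie: a naive Gronwall estimate on the ODE $\dot u = X_1(u,\vp(u))$ would only yield an $e^{Lt}$ spread between nearby points on $G_\vp$, which is useless as $T$ grows. The remedy is that the flow restricted to the single orbit $G_\vp$ is just a one-dimensional time-reparametrization. Writing the travel time along $G_\vp$ as $T_\vp(a,b) := \int_a^b du/X_1(u,\vp(u))$, the group property of the flow forces $T_\vp(\bar u_1, \bar u_2) = T_\vp(x_1, x_2)$, and the pinching $\mu_m \le X_1(\cdot, \vp(\cdot)) \le \mu_M$ then yields
\[
|\bar u_1 - \bar u_2| \le \frac{\mu_M}{\mu_m}\,|x_1 - x_2| \le \frac{\mu_M}{\mu_m}\,c_0 \eps,
\]
with the $y$-component bounded by the same Lipschitz estimate as above. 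Uniformity in $t$ is automatic because the travel-time identity does not involve $t$.

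Gathering the three estimates, $|q_1 - q_2| \le \eps/3 + A_0 c_0 \eps + A_1 c_1 \eps$ for constants $A_0, A_1$ depending only on $X$. Choosing $c_0, c_1$ small enough that $A_0 c_0 + A_1 c_1 < 2/3$ (which is exactly how the constants of the statement are selected) forces $|q_1 - q_2| < \eps$ uniformly in $t \in [0,T]$ and in $(p_1,p_2) \in D^2$, proving the $d_T^{\phi_X}$-diameter bound.
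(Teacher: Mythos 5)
Your argument is correct and uses essentially the same ingredients as the paper's proof: the vertical deviation bound from Proposition~\ref{L1Deviation} for the outer terms, the horizontal base-point spread along $G_\vp$ controlled by the travel-time reparametrization together with the pinching $\mu_m\leq X_1\leq\mu_M$ (the paper phrases this via the group property $\phi_X^t(x',\vp(x'))=\phi_X^{t+\tau}(x,\vp(x))$ with $\tau\leq(x'-x)/\mu_m$, which is identical to your $T_\vp$-identity), and the slope-Lipschitz bound for the $y$-component; the paper merely organizes this as bounds on $\diam\pi_x(\phi_X^t(D))$ and $\diam\pi_y(\phi_X^t(D))$ rather than by inserting shadow points on $G_\vp$. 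The only slip is arithmetic: both outer terms pick up $\norm{\psi-\vp}_{C^0}<\eps/3$, so the gathered bound should read $2\eps/3+A_0c_0\eps+A_1c_1\eps$ and the constraint on the constants should be $A_0c_0+A_1c_1<1/3$, which is harmless.
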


\begin{proof}
Set $\al:= \frac{\max X_1}{\min X_1}$ and $\be= \max( 1, \max X_1)$. 
Let $\eps >0$ and $t\in [0, \mu_MT]$.
Assume that $\norm {\psi-\vp}_{L^1([0,\mu_MT])}\leq \frac{\eps}{9\be C}$.
Then by the previous proposition, for all $x_0\in \R$,  and for $0 \leq t \leq T$, 
\begin{equation}\label{devdomaine}
\Dev(V_{x_0}(\vp, \psi),t)\leq C\norm{\psi-\vp}_{L^1([0,\mu_Mt])}\leq C\norm{\psi-\vp}_{L^1([0,\mu_MT])}\leq \frac{\eps}{9\be}.
\end{equation}
Let $x<x'$ in $\R$ with $x'-x \leq \frac{\eps}{9\be\al}$.
Let $\tau$ be such that $\pi_x(\phi_X^\tau((x, \vp(x)))=x'$. Then 
\[
\tau \leq \frac{x'-x}{\min X_1}\leq \frac{\eps}{9\al \be}\frac{1}{\min X_1}.
\]
Now, 
\begin{equation}\label{distancexx'}
0<\pi_x(\phi_X^t((x', \vp(x')))-\pi_x(\phi_X^t((x, \vp(x)))\leq \tau \max X_1\leq \frac{\eps}{9\al \be}\frac{\max X_1}{\min X_1}=\frac{\eps}{9\be}.
\end{equation}
Gathering (\ref{devdomaine}) and (\ref{distancexx'}), one sees that 
\[
\diam\pi_x(\phi_X^t(D(x_0, x_0', \vp, \psi)))\leq \frac{\eps}{3\be}.
\]
If $\norm{\psi-\vp}_{C^0}\leq \frac{\eps}{3}$, and since at any point $x\in \R$, the ``slopes'' of the graphs $G_\vp$ and $G_\psi$ take their values in $[-\be,\be]$, by triangular inequality, one checks that  
\[
\diam\pi_y(\phi_X^k(D(x_0, x_0', \vp, \psi)) \leq \frac{\eps}{3}+2\be \frac{\eps}{3\be}\leq \eps.
\]
This concludes the proof with $c_0=\frac{1}{9\be \al}$, $c_1=\frac{\eps}{9\be C}$.
\end{proof}

\subsection{Vector fields without periodic orbits}
This section is devoted to the proof of the following result.

\begin{propa}\label{without}
Let $X$ be a $C^1$ nonvanishing vector field on $\T^2$ without periodic orbits. Let $\phi_X$ be the flow of $X$. 
Then $\hp(\phi_X)\in [0,1]$.
\end{propa}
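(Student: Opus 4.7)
The plan is to exploit the absence of periodic orbits to obtain a global cross-section, reducing the covering problem on $\T^2$ to a counting problem on the cross-section to which Corollary~\ref{corollaire} applies.

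By a classical result on surface flows, a $C^1$ non-vanishing vector field on $\T^2$ without periodic orbits admits a global cross-section. After a linear change of coordinates on $\T^2$ one may assume $X_1>0$ everywhere, so that $\Sigma:=\{x\equiv 0\}$ is a cross-section; set $\mu_m=\min X_1>0$ and $\mu_M=\max X_1$. Lifting $X$ to $\R^2$, Lemma~\ref{orbitegraphe} ensures that orbits are graphs $y=\vp_y(x)$, parametrized by $y=\vp_y(0)$ on the lift of $\Sigma$. The Poincar\'e first-return map $P:\Sigma\to\Sigma$ is then an orientation-preserving circle homeomorphism with no periodic point, hence with irrational rotation number; by Theorem~\ref{cercle}, $\hp(P)\leq 1$.

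Fix $\eps>0$, $T$ large, and set $n:=\lceil \mu_M T\rceil$. A Gronwall-type estimate applied to $dy/dx=(X_2/X_1)(x,y)$ on each $[k,k+1]$ (crossed in time $\leq 1/\mu_m$) yields a constant $K$, depending only on $X$, such that for any two orbits $\vp_y<\vp_{y'}$ and any $x\in[k,k+1]$,
\[
|\vp_{y'}(x)-\vp_y(x)|\leq K\,|\tilde P^k y'-\tilde P^k y|,
\]
where $\tilde P$ is a lift of $P$ to $\R$. Summing gives $\|\vp_{y'}-\vp_y\|_{L^1([0,n])}\leq K\sum_{k=0}^{n-1}|\tilde P^k y'-\tilde P^k y|$, which is much sharper than the naive bound $n\,\|\vp_{y'}-\vp_y\|_{C^0}$. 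I then partition $\Sigma$ via the continuous, strictly increasing function $\Phi(y):=\sum_{k=0}^{n-1}(\tilde P^k y-\tilde P^k y_0)$; since $\tilde P$ commutes with translation by $1$, $\Phi(y_0+1)=n$. With $c_0,c_1$ the constants of Corollary~\ref{corollaire} and $\eta:=\min(c_1\eps/K,\,\eps/(3K))$, I divide $[0,n]$ into $N:=\lceil n/\eta\rceil=O_\eps(T)$ equal pieces via $\Phi^{-1}$, obtaining arcs $A_i=[y_i,y_{i+1}]$ with $\sum_k|\tilde P^k A_i|\leq\eta$, and in particular $\max_k|\tilde P^k A_i|\leq\eta$. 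This produces $\|\vp_{y_{i+1}}-\vp_{y_i}\|_{C^0([0,\mu_M T])}\leq\eps/3$ and $\|\vp_{y_{i+1}}-\vp_{y_i}\|_{L^1([0,\mu_M T])}\leq c_1\eps$.

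For each arc $A_i$, the first-return lifted flow box $R_i:=\{(x,z):0\leq x\leq 1,\ \vp_{y_i}(x)\leq z\leq\vp_{y_{i+1}}(x)\}$ projects to a flow box in $\T^2$, and the $R_i$'s tile $\T^2$. Cutting each $R_i$ in the $x$-direction into $M:=\lceil 1/(c_0\eps)\rceil=O(1/\eps)$ strips of $x$-width $\leq c_0\eps$ and applying Corollary~\ref{corollaire} to each strip, one obtains a cover of $\T^2$ by $NM=O_\eps(T)$ sets of $d_T^{\phi_X}$-diameter $\leq\eps$; hence $\hp(\phi_X)\leq 1$. The main obstacle is the $L^1$-condition in Corollary~\ref{corollaire}: invoking only Theorem~\ref{cercle} to control $d_n^P$-diameters would force $\delta\sim 1/n$ and give $\hp(\phi_X)\leq 2$. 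The key saving is the trivial identity $\sum_i\sum_k|\tilde P^k A_i|=n$, valid because $P$ merely permutes a length-$1$ partition of $\Sigma$; this is what makes $\Phi$ grow linearly in $n$ and hence allows $N=O(n)$.
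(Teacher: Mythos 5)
Your proposal is correct and follows the same overall strategy as the paper — reduce to a suspension over a circle cross-section, then build a cover of a fundamental domain by $O_\eps(T)$ flow boxes using Corollary~\ref{corollaire} — but the quantitative core is implemented differently, and it is worth spelling out the contrast. The paper measures the ``thickness'' of an orbit strip over $[0,T]$ by its Lebesgue area $A[\vp,\psi,T]$, observes that $A[\vp_0,\vp_1,T]=T$ so the fundamental strip can be sliced into $O_\eps(T)$ equal-area substrips, and then uses Lemma~\ref{minorationaire} (the inequality $A[\vp,\psi,T]\ge\tfrac12\|\psi-\vp\|_{C^0([0,T])}^2$, proved from the uniform slope bound on orbits) to convert the area bound into the $C^0$ bound needed in Corollary~\ref{corollaire}, the $L^1$ bound being immediate. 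You instead measure the same thickness by the Birkhoff sum $\Phi(y)=\sum_{k<n}(\tilde P^k y-\tilde P^k y_0)$ along the Poincar\'e map, observe that $\Phi(y_0+1)=n$ by equivariance, and slice into $O_\eps(T)$ pieces of equal $\Phi$-increment; a per-block Gronwall estimate bridges a unit $x$-interval, after which $\max_k\le\sum_k$ hands you the $C^0$ bound and the same sum hands you the $L^1$ bound. The two ``linearity in $T$'' facts ($A[\vp_0,\vp_1,T]=T$ versus $\Phi(y_0+1)=n$) are essentially the same statement, and your $\Phi$ is a Riemann-sum avatar of the paper's area, the two agreeing up to the Gronwall constant. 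What the paper's route buys is the avoidance of Gronwall altogether and a self-contained geometric lemma; what yours buys is a single functional $\Phi$ that simultaneously controls both norms and, as you note, makes transparent exactly why the naive scheme (using Theorem~\ref{cercle} to control $d_n^P$-diameters of arcs) would only give $\hp\le 2$ while the $L^1$ condition in Corollary~\ref{corollaire} saves a factor of $n$. Two small imprecisions you should repair if writing this up: a linear change of coordinates on $\T^2$ only puts the homotopy class of the cross-section in standard position — one still needs a further ($C^1$, not linear) change to straighten the section and ensure $X_1>0$, which is exactly what the paper's passage through a $C^0$-conjugacy to a suspension (Lemma~\ref{suspension}, Property~\ref{ptehphw}(1)) handles cleanly; and you should state that the Gronwall constant $K$ is uniform over all unit $x$-blocks, which holds because $X_2/X_1$ is Lipschitz with constant controlled by $X$ and the crossing time per block is bounded by $1/\mu_m$.
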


To begin with, we summarize some well-known results that can be found in \cite{God83}. Let $X$ be $C^1$ nonvanishing  vector field on the $\T^2$ and let $\phi_X=(\phi_X^t)_{t\in \R}$ be its flow. If there are no periodic orbits, the flow $\phi_X$ possesses a global closed transverse section $\Ga$. Such a section possesses a Poincaré return map $h$ and the flow $\phi_X$ is conjugate to a reparametrization of a suspension of $h$. 
Recall that a suspension of $h$ is a $C^1$ vector field $X_h$ on $\T^2$ such that its flow $\phi_h$ satisfies $\phi_h^1(z)=h(z)$ for all $z\in \Ga$. 
The Poincaré map $h$ is a diffeomorphism of the circle. Up to a change of sign on $X_h$, we can assume that $h$ is orientation preserving. The following lemma gives an explicit construction of a suspension.

\begin{lem}\label{suspension}
Let $f: \R\rit \R$ be a $C^1$ increasing diffeomorphism. There exists a vector field $\ha X_f$ on $\R^2$ satisfying $\ha X_f(x,y)=(1, X_2(x,y))$ such that $X_2(x+1,y)=X_2(x,y)$ for all $(x,y)$ and which defines a complete flow $(\ha\phi_f^t)_{t\in \R}$ on $\R^2$ such that :
\[
\ha\phi_f^1(n,y)=(n+1, f(y)), \quad \forall (n,y)\in \Z\times \R.
\]
\end{lem}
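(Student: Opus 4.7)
The plan is to realize $\ha X_f$ as the pushforward of the constant field $\partial_x$ on a fundamental strip $[0,1]\times\R$ under an isotopy from $\Id$ to $f$, and then to extend the second component by $1$-periodicity in $x$. The only delicate point is to design the isotopy so that this periodic extension is $C^1$ (or smoother).

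I would fix once and for all a $C^{\infty}$ function $\chi : [0,1]\to[0,1]$ with $\chi(0)=0$, $\chi(1)=1$ and $\chi^{(k)}(0)=\chi^{(k)}(1)=0$ for every $k\ge 1$ (a standard smooth step), and set $\psi_s(y):=(1-\chi(s))y+\chi(s)f(y)$ for $s\in[0,1]$. Since $\psi_s'(y)=(1-\chi(s))+\chi(s)f'(y)>0$ (using $f'>0$), each $\psi_s$ is an increasing $C^1$ diffeomorphism of $\R$, with $\psi_0=\Id$ and $\psi_1=f$. The map $\Psi:[0,1]\times\R\to[0,1]\times\R$, $(s,y_0)\mapsto(s,\psi_s(y_0))$, is then a $C^1$ diffeomorphism, and pushing $\partial_s$ forward by $\Psi$ yields on $[0,1]\times\R$ the field $(1,X_2)$ with
\[
X_2(x,y)=\chi'(x)\bigl(f(\psi_x^{-1}(y))-\psi_x^{-1}(y)\bigr).
\]

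I then extend $X_2$ to $\R^2$ by $X_2(x+1,y)=X_2(x,y)$. Because $\chi^{(k)}(0)=\chi^{(k)}(1)=0$ for every $k\ge 1$, every partial derivative $\partial_x^k X_2$ vanishes along the lines $\{x=0\}$ and $\{x=1\}$, so the extension is $C^{\infty}$ across them (in particular $C^1$ globally), and its $y$-smoothness follows from that of $f$ via the inverse function theorem applied to $\psi_x$. By construction, the integral curve of $\ha X_f=(1,X_2)$ through $(0,y)$ is $s\mapsto(s,\psi_s(y))$, which reaches $(1,f(y))$ at $s=1$; and since $X_2$ is $1$-periodic in $x$, the translations $(x,y)\mapsto(x+n,y)$ conjugate the flow to itself, so $\ha\phi_f^1(n,y)=(n+1,f(y))$ for every $n\in\Z$. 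Completeness is automatic: the $x$-component of any integral curve is affine of slope one, and the curve glues strip by strip via translated copies of the isotopy $\psi_s$, so it exists for all $t\in\R$.

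The main obstacle is matching $X_2$ and its $x$-derivatives across the lines $\{x\in\Z\}$. A naive linear interpolation $\chi(s)=s$ already fails to yield a continuous field, since the boundary values $X_2(0,y)=f(y)-y$ and $X_2(1,y)=y-f^{-1}(y)$ do not coincide in general. The vanishing condition $\chi'(0)=\chi'(1)=0$ forces both of these to be zero, and requiring further that $\chi^{(k)}(0)=\chi^{(k)}(1)=0$ for $k\ge 2$ produces matching of all higher derivatives without imposing any additional assumption on $f$, which is the only nontrivial input in the construction.
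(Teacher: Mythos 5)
Your proof follows essentially the same route as the paper: both construct $(1,X_2)$ from the isotopy $y\mapsto(1-\chi(x))y+\chi(x)f(y)$, read $X_2$ off as its $x$-derivative, and extend $1$-periodically in $x$, the only real difference being that the paper takes its weight function $\eta$ locally \emph{constant} near $0$ and $1$ (so that $X_2\equiv 0$ on open strips around the gluing lines, making the extension trivially as regular as the interior piece), while you only require all derivatives of $\chi$ to vanish at the endpoints. One small overclaim to fix: since $f$ is only $C^1$, the factor $g(x,y)=f(\psi_x^{-1}(y))-\psi_x^{-1}(y)$ is only $C^1$ in $x$, so $\partial_x^k X_2$ need not even exist for $k\geq 2$; the correct statement is that $X_2$, $\partial_x X_2$ and $\partial_y X_2$ all vanish on $\{x=0\}$ and $\{x=1\}$, giving a $C^1$ (not $C^\infty$) extension across $\{x\in\Z\}$, which is exactly what the lemma requires.
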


\begin{proof}
Let $\eta :]-\frac{1}{4},\frac{5}{4}[\rit [0,1]$ be a $C^\infty$ function such that $\eta\equiv 0$ on $]-\frac{1}{4},\frac{1}{4}[$ and $\eta\equiv 1$ on $]\frac{3}{4},\frac{5}{4}[$. 
Let $F:]-\frac{1}{4},\frac{5}{4}[\times \R\rit \R :(t,x)\ma (1-\eta(t))x+\eta(t)f(x)$. We set $f_t:=F(t, \cdot):\R\rit \R$. 
Then for all $t\in ]-\frac{1}{4},\frac{5}{4}[$, $f_t$ is a diffeomorphism and $F$ is an isotopy between $\Id_\R$ and $f$.

Consider the nonautonomous vector field on $\R$ defined by 
\[
Y(t_0,x_0)=\frac{d}{dt}f_{t_0+t}\circ f_{t_0}\inv(x_0)|_{t=0}.
\]
Then for $(t_0,x_0)\in\,]-\frac{1}{4}, \frac{5}{4}[\times \R$, the solution $\ga$ of the Cauchy problem $\dot{x}(t)=Y(t,x(t)), x(t_0)=x_0$ is given by 
\[
\ga(t)= f_t\circ f_{t_0}\inv(x_0)
\]
for $t$ close enough to $t_0$.
Indeed, 
\begin{align}
Y(t,\ga(t))& =\frac{d}{ds}f_{t+s}\circ f_t\inv(\ga(t))|_{s=0}\\
& =\frac{d}{ds}f_{t+s}\circ f_t\inv(f_t\circ f_{t_0}\inv(x_0))|_{s=0}\\
& = \frac{d}{ds}f_{t+s}\circ f_{t_0}\inv(x_0)|_{s=0}  =\dot{\ga}(t).
\end{align}
Consider now the vector field $X$ on $]-\frac{1}{4}, \frac{5}{4}[\times\R$ defined by $X(x,y)=(1, Y(x,y))$. By construction, the flow $\phi_X$ associated with $X$ is defined by 
\[
\phi_X^t(x,y)=(x+t, f_{x+t}\circ f_{x}\inv(y))
\]
In particular, $\phi_X^1(0,y)=(1, f(y))$.
Since for all $x\in\,]-\frac{1}{4}, \frac{1}{4}[$ and all $y\in \R$, $X(x,y)=X(x+1, y)=(1,0)$, one can define a continuation $X_f$ of $X$ in $\R^2$ by setting $X_f(x,y)= X(x-[x],y)$.
The vector field $X_f$ satisfies the required properties.
\end{proof}

In other words, one can always construct a suspension of $h$ as $C^1$ vector field $X_h$ on $\T^2$ which admits a lift $\ha X_h=(1,X_2)$ on $\R^2$ such that each vertical $V_m:=\{m\}\times\R$ is a global section for the flow $\ha\phi_h$ of $\ha X_h$, and such that the Poincaré map $V_m\to V_{m+1}$ is a lift of $h$ to $\R$. 
A reparametrized suspension of $h$ is a vector field of the form $\xi X_h$, where $\xi\in C^1(\T^2,\R^{*+})$ and $X_h$ is a suspension of $h$. 
By property \ref{ptehphw} 1), it sufficies to compute the polynomial entropy of the flow $\phi_h$ associated with $X_h$. To do this we will use Corollary \ref{corollaire}.

\begin{rem} 1) Such a lift $\ha X_h$ satisfies the assumptions of  the $L^1$ comparison lemma. 

\noindent 2) If $G_{\vp_0}:=\{(x,\vp_0(x)\,|\, x\in \R\}$  is an orbit of $\ha X_h$, then  $G_{\vp_0+1}:=\{(x,\vp_0(x)+1\,|\, x\in \R\}$ is also an orbit of $\ha X_h$, and the compact connected domain delimited by $G_{\vp_0}$, $G_{\vp_0+1}$, $V_0$ and $V_1$ is a fundamental domain for $\T^2$.
\end{rem}

Given two orbits $G_\vp$ and $G_\psi$ with $\vp<\psi$,  we denote by $S[\vp,\psi]$ the  strip of $\R^2$ bounded by $G_\vp$ and $G_\psi$, and by $S[\vp,\psi,T]$ the subdomain of $S[\vp,\psi]$ limited by the verticals $V_0$ and $V_T$. 
We denote by $A[\vp,\psi,T]$ the Lebesgue measure of $S[\vp,\psi,T]$.

\begin{lem}\label{minorationaire}Consider two functions $\vp<\psi$ such that their graphs $G_\psi$ and $G_\vp$ are orbits of $\ha X_h$. Assume that $\norm{\psi-\vp}_{C^0(\R)} <+\infty$. There exists $\nu$ depending only on $\ha X_h$, such that for any $T\geq \nu\norm{\psi-\vp}_{C^0(\R)}$,
\[
A[\vp,\psi,T]\geq \demi\norm{\psi-\vp}^2_{C^0([0,T])}.
\]
\end{lem}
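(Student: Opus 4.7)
The plan is to exploit a Gronwall-type estimate on $u := \psi - \vp > 0$. Since the graphs $G_\vp, G_\psi$ are orbits of $\ha X_h = (1, X_2)$, we have $\vp'(x) = X_2(x, \vp(x))$ and $\psi'(x) = X_2(x, \psi(x))$. Applying the mean value theorem in the $y$-variable to $X_2$, there exists $\xi(x) \in [\vp(x), \psi(x)]$ with
\[
u'(x) = a(x)\, u(x), \qquad a(x) := \partial_y X_2(x, \xi(x)), \qquad |a(x)| \leq L := \sup_{\T^2} |\partial_y X_2| < \infty.
\]
Integrating the linear ODE gives $u(x) = u(x_0)\exp\!\bigl(\int_{x_0}^x a(s)\,ds\bigr)$, hence the two-sided bound
\[
u(x_0)\, e^{-L|x - x_0|} \leq u(x) \leq u(x_0)\, e^{L|x-x_0|}\qquad\text{for all }x,x_0\in\R.
\]

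Next, set $M := \norm{u}_{C^0([0,T])}$ and pick $x_0 \in [0,T]$ with $u(x_0) = M$. The target is to locate an interval $I\subset[0,T]$ around $x_0$ on which $u \geq M/2$ and whose length is at least $M$; integration would then immediately give $A[\vp,\psi,T] \geq (M/2)\cdot M = M^2/2$. A direct application of the Gronwall bound shows that $u\geq M/2$ on $[x_0-(\log 2)/L,\,x_0+(\log 2)/L]$, i.e.\ on an interval of length $(2\log 2)/L$, independent of $M$; this already handles the regime where $M\leq (2\log 2)/L$.

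To handle the remaining regime and to enlarge the plateau up to length $\geq M$, the plan is to exploit the hypothesis $T\geq\nu N$ together with the integer-translation invariance of $\ha X_h$ in the $x$-direction. By replacing $(\vp,\psi)$ with $(\vp(\cdot-k),\psi(\cdot-k))$ for a suitable integer $k$, one may arrange that a point where $u$ is close to $N$ lies well inside $[0,T]$ at distance at least $N/2$ from its endpoints. One then couples the local growth rate $|u'|\leq Lu$ with the global ceiling $u\leq N$ and the monotonicity of $u$ on one side of each local extremum to extract a subinterval of $[0,T]$ of length at least $M$ on which $u\geq M/2$. Once such a plateau is found, a single integration yields the desired $A[\vp,\psi,T]\geq \demi M^2$.

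The main obstacle lies precisely in this last refinement: the Gronwall estimate alone produces a plateau whose size is independent of $M$, and one must combine the local exponential control coming from $L$ with the global $C^0(\R)$ bound coming from $N$ to upgrade the length of the plateau to the $M$-dependent value required. This is where the constant $\nu$ enters, depending on $L$ and the structure of $\ha X_h$: it must be chosen large enough so that $T$ accommodates both the translation step (to place the peak well inside $[0,T]$) and the full length-$M$ plateau needed for the final integration.
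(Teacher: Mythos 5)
Your Gronwall route has a genuine gap, and it is not a matter of unfilled details: the information extracted by writing $u'=a(x)u$ with $\abs{a}\leq L$ is strictly too weak to give a quadratic lower bound on the area. The inequality $u(x)\geq u(x_0)e^{-L\abs{x-x_0}}$ only yields a plateau $\{u\geq M/2\}$ of fixed length $\tfrac{2\log 2}{L}$, independent of $M$; and within the class of positive functions satisfying $u'=a(x)u$ with $\abs{a}\leq L$ and $\sup_\R u=M$, the profile $u(x)=M e^{-L\abs{x-T/2}}$ has $\int_0^T u\leq 2M/L$ for every $T$, which is below $\tfrac12 M^2$ once $M>4/L$. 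So no amount of "coupling the local growth rate with the global ceiling $u\leq N$" or translating the peak to the middle of $[0,T]$ can close the gap: the hypotheses you retained are simply consistent with a counterexample to the conclusion.

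What you discarded, and what the paper actually uses, is a \emph{uniform Lipschitz bound} on $u=\psi-\vp$. Since $G_\vp,G_\psi$ are orbits of $\ha X_h=(1,X_2)$, one has $\vp'(x)=X_2(x,\vp(x))$ and $\psi'(x)=X_2(x,\psi(x))$, hence $\abs{\vp'},\abs{\psi'}\leq p:=\max_{\T^2}\abs{X_2}$ and $\abs{u'}\leq 2p$. This gives the \emph{linear} decay $u(x)\geq u(x_0)-2p\abs{x-x_0}$ from the peak $x_0\in[0,T]$ where $u(x_0)=M:=\norm{u}_{C^0([0,T])}$, so $u\geq M/2$ on a subinterval of $[0,T]$ of length $\tfrac{M}{4p}$ --- exactly the $M$-proportional plateau you were missing. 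The hypothesis $T\geq \nu\norm{\psi-\vp}_{C^0(\R)}$ with $\nu=\tfrac{1}{4p}$ then ensures such an interval fits inside $[0,T]$, and one integration gives $A[\vp,\psi,T]\geq\tfrac{M}{4p}\cdot\tfrac{M}{2}$. (Note in passing that this produces the constant $\tfrac{1}{8p}$ rather than the $\tfrac12$ written in the lemma; the discrepancy is harmless since only the $M^2$ scaling is used downstream, but the constant should really depend on $\ha X_h$.) Concretely, then, your mistake is a wrong choice of first inequality: differentiating $X_2$ in $y$ throws away the fact that $X_2$ itself is bounded, which is the one piece of structure that makes the quadratic estimate true.
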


\begin{proof} Let $p=\max_{\R^2}\left|\frac{X_2(x,y)}{X_1(x,y)}\right|$.
We first observe that since $G_\vp$ and $G_\psi$ are orbits of $\ha X$, for all $x\in \R$, $|\vp'(x)| \leq p$.
Let $x_0\in \R$.
By the Mean Value Theorem, for all $x\geq x_0$, 
\[
\psi(x)-\vp(x)\geq \psi(x_0)-\vp(x_0)-2p(x-x_0)
\]
 and for all $x\leq x_0$,
\[
\psi(x)-\vp(x)\geq \psi(x_0)-\vp(x_0)-2p(x_0-x).
\]
Then, if $|x-x_0| \leq \frac{1}{4p}(\psi(x_0)-\vp(x_0))$, 
\begin{equation}\label{TAF}
\psi(x)-\vp(x)\geq \demi (\psi(x_0)-\vp(x_0)).
\end{equation}

Let $T\geq \frac{1}{4p}\norm{\psi-\vp}_{C^0(\R)}$. Let $x_0\in [0,T]$ be such that $\norm{\psi-\vp}_{C^0([0,T])}= \psi(x_0)-\vp(x_0)$.
Let  $I\subset [0,T]$ be an interval with length $\frac{1}{4p}\norm{\psi-\vp}_{C^0([0,T])}$ containing $x_0$.
Then applying (\ref{TAF})
\[
A[\vp, \psi, T] \geq \int_I \psi(x)-\vp(x)dx \geq \demi \norm{\psi-\vp}_{C^0([0,T])}^2,
\]
which concludes the proof with $\nu= \frac{1}{4p}$.
\end{proof}

\begin{proof}[Proof of Proposition \ref{without}] We will prove that $\hp(\phi_h) \leq 1$. 
Fix $0<\eps<1$. 
As usual, we will prove that for $T$ large enough $D_T^{\phi_h}(\eps) \leq cT+d$ with $c,d$ depending only on $\eps$. 
To do this, we will construct a cover of a fundamental domain $\De$ for $\T^2$ in $\R^2$ by sets with $d_T^{\ha\phi_h}$-diameter less than $\eps$.
For $y\in \R$, we denote by $\vp_y$ the function such that $G_{\vp_y}$ is the orbit of $(0,y)$. Then $\vp_1=\vp_0+1$ and the compact domain $\De$ delimited by $\vp_0, \vp_1, V_0$ and $V_1$ is a fundamental domain for $\T^2$.

Let $T>\frac{1}{4p}$.
Since the function $y \ma A[\vp_0, \vp_y, T]$ is a homeomorphism of $\R^+$, there exists a unique $y_1>0$ such that
\[
A[\vp_0, \vp_{y_1}, T]=\min\left(c_1\eps, \demi \frac{\eps^2}{9}\right).
\]
Iterating the processus, we find an increasing sequence $(y_k)_{k\in \N}$ in $\R^+$ such that for all $k$
\[
A[\vp_{y_k}, \vp_{y_{k+1}}, T]=\min(c_1\eps, \demi \frac{\eps^2}{9}).
\]
Set $c_\eps:=\min(c_1\eps, \demi \frac{\eps^2}{9})\inv$.
Since $A[\vp_0, \vp_1,T]=T$ there exists $\ka \leq  c_\eps T+1$ such that the strips $S(\vp_k,\vp_{k+1},T)$ for $0\leq k \leq \ka$ cover $S(\vp_0,\vp_1,T)$.
Now, by the previous lemma, for any $k$, $\norm{\vp_{y_k}-\vp_{y_{k+1}}}_{C^0([0,T])}\leq \frac{\eps}{3}$.
Then by Corollary \ref{corollaire}, the intersection of any of the previous strips with $\De$ is covered by $\frac{1}{c_0\eps}+1$ subsets with $d_T^{\ha \phi_h}$-diameter less than $\eps$. 
This way we get a cover of $\De$  by $\frac{c_\eps}{c_0\eps} T + 1/(c_0\eps)+1$ subsets with $d_T^{\ha \phi_h}$-diameter less than $\eps$. The projection of these subsets on $\T^2$ yields a cover  of $\T^2$ by subsets $d_T^{\phi_h}$-diameter less than $\eps$.
Then for all $T\geq \frac{1}{4p}$, $D_T^{\phi_h}(\eps) \leq \frac{c_\eps}{c_0\eps} T + 1/(c_0\eps)+1$ and $\hp(\phi_h)\leq 1$.
\end{proof}


\subsection{Vector fields with periodic orbits}
This section is devoted to the proof of the following result.

\begin{propa}\label{periodicorbit}
Let $X$ be a $C^1$ nonvanishing vector field on $\T^2$ that possesses periodic orbits. Let $\phi_X$ be the flow of $X$. 
Then $\hp(\phi_X)\in \{0,1\}$.
Moreover $\hp(\phi_X)=0$ if and only if $\phi_X^1$ is conjugate to a rotation.
\end{propa}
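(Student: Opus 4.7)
The plan is to reduce to the Poincar\'e structure on a global transversal to the flow and combine the case analysis of Proposition~\ref{rationel} with a direct counting argument in the subcase where Proposition~\ref{pointerrant} does not apply. Since $X$ is nonvanishing on $\T^2$ and admits a periodic orbit, all periodic orbits are pairwise freely homotopic non-contractible simple closed curves; after a linear coordinate change I may assume they lie in the homotopy class $(1,0)$, so the vertical circle $\Gamma:=\{0\}\times\T$ is transverse to every orbit, the Poincar\'e return map $h\in\Hom^+(\Gamma)$ has $\rho(h)=0$, and its fixed points correspond exactly to the periodic orbits of $\phi_X$. By Lemma~\ref{suspension}, $\phi_X$ is a reparametrized suspension of $h$. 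The upper bound $\hp(\phi_X)\leq 1$ follows from the argument of Proposition~\ref{without}, which only uses the orbits-as-graphs picture together with Lemma~\ref{minorationaire} and Corollary~\ref{corollaire} and carries over in the presence of periodic orbits.

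If $\phi_X^1$ is conjugate to a rotation then $\hp(\phi_X)=\hp(\phi_X^1)=0$; to establish the dichotomy I must show that otherwise $\hp(\phi_X)\geq 1$. If some orbit $\mathcal O$ of $\phi_X$ is not periodic, then $\mathcal O$ is confined between two adjacent periodic orbits on $\T^2$, its $\alpha$- and $\omega$-limit sets lie in those bounding orbits, and every point of $\mathcal O$ is a wandering point of $\phi_X^1$; Proposition~\ref{pointerrant} then gives $\hp(\phi_X)\geq 1$, and $\phi_X^1$ cannot be conjugate to a rotation since rotations have no wandering points. The remaining and subtler case is when every orbit of $\phi_X$ is periodic.

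In this case $h=\Id_\Gamma$, so $\phi_X=\xi X_{\Id}$ for some $C^1$ function $\xi>0$ on $\T^2$, every horizontal circle $C_y:=\T\times\{y\}$ is $\phi_X^1$-invariant, and the restriction $\phi_X^1|_{C_y}$ is a circle homeomorphism whose rotation number $a(y)$ depends continuously on $y$. If $a$ is constant, one glues the conjugacies of each $\phi_X^1|_{C_y}$ to the rotation by $a$ into a global conjugacy of the form $\Phi(x,y)=(\psi_y(x),y)$ between $\phi_X^1$ and $(x,y)\mapsto(x+a,y)$. If $a$ is not constant, then by continuity there is an interval on which $a$ varies by some $\delta>0$, and this is the main obstacle: since no point is wandering, Proposition~\ref{pointerrant} is of no use, and I must instead construct $(n,\eps)$-separated sets directly. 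Picking $N\sim n\delta/\eps$ values $y_1<\dots<y_N$ in that interval such that the $a(y_i)$ are roughly equispaced (intermediate value theorem), the points $(0,y_i)$ are pairwise $(n,\eps)$-separated for $\phi_X^1$: after $k$ iterates their $x$-coordinates differ by $k(a(y_i)-a(y_j))\bmod 1$, which reaches $\eps$ for some $k\leq n$ as soon as $|a(y_i)-a(y_j)|\geq \eps/n$. This yields $S_n^{\phi_X^1}(\eps)\geq c\,n$ and hence $\hp(\phi_X)\geq 1$, completing the dichotomy.
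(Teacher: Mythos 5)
The central gap is your reduction to a suspension over a single global closed transversal $\Gamma$ with Poincar\'e return map $h\in\Hom^+(\Gamma)$. In the presence of a zone of type (II) --- an annulus bounded by two periodic orbits on which the flow restricts to rotations of \emph{opposite} sense --- no such $\Gamma$ exists in the homotopy class dual to the periodic orbits. A closed transversal $\Gamma$ meeting every periodic orbit once must be crossed by the flow with a consistent sign, yet the two boundary orbits of a type (II) zone would cross it with opposite signs, a contradiction. Concretely, after normalizing the periodic orbits to the class $(1,0)$, the first component of $X$ changes sign inside a type (II) zone, so some orbit is vertical there, it is not a graph over the $x$-axis, and the lift $\ha X_h=(1,X_2)$ that you need in order to invoke Lemma~\ref{minorationaire} and Corollary~\ref{corollaire} does not exist. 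Hence the claim that the upper bound $\hp(\phi_X)\leq 1$ ``carries over'' from Proposition~\ref{without} is not justified; that argument only works when every orbit is a graph, which is exactly what fails here. (This is precisely the point of Remark~\ref{pointerranttype12-sectiontransverse}(3).)

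The paper's route is structurally different: it partitions $\T^2$ into zones of type (I), (II), (III), proves $\hp\leq 1$ on each zone separately --- via the hyperbolic normal form of Proposition~\ref{conj_hyp} for types (I)/(II) (Proposition~\ref{type12}) and via the action-angle normal form for type (III) (Propositions~\ref{type3} and~\ref{AA}) --- and then confronts the genuine extra difficulty that periodic orbits can accumulate, producing infinitely many zones; this is handled by Lemmas~\ref{sectiontransverse} and~\ref{hpdansS} together with compactness of the set of periodic points and Property~\ref{ptehphw}(5). Your scheme, had the suspension step held, would have elegantly bypassed the accumulation issue, which is why it is appealing; but the type (II) obstruction is fatal to it. Your lower-bound ingredients are fine and closely parallel the paper's: wandering points via Proposition~\ref{pointerrant} when some orbit is non-periodic, and an $(n,\eps)$-separated set built from a non-constant rotation-number function when all orbits are periodic, which is exactly the mechanism behind Proposition~\ref{AA}.
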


To begin with, we describe briefly the dynamics of such a vector field. Since it is closely related to the dynamics of nonvanishing vector field on \emph{plane annuli}, that is, compact domains of $\R^2$ homeomorphic to $\T\times [0,1]$, we begin with the description of such systems.
The following discussion is extracted from \cite{God83} and we refer to it for a complete survey of the theory and for the proofs of the statements.
\vspace{0.2cm}

Consider a plane annulus $A$ with coordinates $(\th,r)\in \T\times [0,1]$.
Let $Y$ be a nonvanishing vector on $A$ such that $\T\times\{0\}$ and $\T\times\{1\}$ are periodic orbits for $Y$. 
We say that $A$ is a \emph{component of type} (III) if  $A$ is foliated by periodic orbits.

If there are no periodic orbits in $\stackrel{\circ}{A}$, we say that $A$ is a \emph{component of type} (II) if the orientations induced by $Y$ on the periodic orbits $\T\times\{0\}$ and $\T\times\{1\}$  do coincide with an orientation of $A$, and a \emph{component of type} (I) if not. 
Then, in both components of types (I) and (II), one of the periodic orbits $\T\times\{0\}$ and $\T\times\{1\}$ is asymptotically stable and is the $\om$-limit set of all points of $\stackrel{\circ}{A}$, and the other one is asymptotically unstable and is the $\al$-limit set of all points of $\stackrel{\circ}{A}$. The following proposition is proved in \cite{God83}.

\begin{prop}\label{conj_hyp}
We denote by $\phi=(\phi^t)_{t\in \R}$ the flow of $Y$. 
Assume that $A$ is a component of type \emph{(I)} or \emph{(II)} and that $\T\times\{1\}$ is asymptotically stable. 
There exists $\de\in\, ]0,1[$ such that
\begin{enumerate}
\item there exists $\al_+ \in \R$, $\be_+>0$ and a homeomorphism $\chi_+ :\T\times [1-\de, 1]\rit \T\times [1-\de, 1]$ such that $\chi_+ \circ \phi^t= \psi_+^t\circ \chi_+$ where $\psi_+^t: (\th,r)\ma(\th +t\al_+, re^{-t\be_+})$,
\item there exists $\al_- \in \R$, $\be_->0$ and a homeomorphism $\chi_- :\T\times [0,\de]\rit \T\times [0,\de]$ such that for all $t\geq 0$, the following diagram is commutative:

\[
  \xymatrix{
    \phi^{-t}(\T\times [0,\de]) \ar[r]^{\chi_-} \ar[d]_{\phi^t}  & \psi_-^{-t}(\T\times [0,\de])\ar[d]^{\psi_-^t} \\
    \T\times [0,\de] \ar[r]_{\chi_-} & \T\times [0,\de]
  }
\]
where $\psi_-^t: (\th,r)\ma(\th +t\al_-, re^{-t\be_-})$.
\end{enumerate}
\end{prop}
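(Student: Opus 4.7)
The plan is to apply the classical topological linearization of a planar flow near an asymptotically stable (respectively unstable) periodic orbit: linearize the Poincar\'e return map on a transverse section, then extend the conjugacy along the flow. I sketch (1) in detail and indicate the modifications for (2).

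\textbf{Step 1 (Transverse section and Poincar\'e map).} Construct a smooth transverse section $\Sig_+$ through a point of $\T\times\{1\}$. Since $Y$ is nonvanishing along the compact periodic orbit $\T\times\{1\}$, the flow-box theorem combined with a tubular-neighborhood argument produces an open annular neighborhood $U_+$ of $\T\times\{1\}$ on which $\phi$ is topologically equivalent to the suspension of the first-return map $h_+:\Sig_+\rit\Sig_+$. Identify $\Sig_+$ with an interval $[0,\de']$ so that $0$ corresponds to $\T\times\{1\}$. By the assumption that $A$ is a component of type (I) or (II) with $\T\times\{1\}$ asymptotically stable, the orbit $\T\times\{1\}$ attracts every point of the interior of $A$ and there are no other periodic orbits there, so $h_+$ is an orientation-preserving homeomorphism with $h_+(0)=0$ and $h_+(r)<r$ on $]0,\de']$, i.e.\ a strict topological contraction.

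\textbf{Step 2 (Linearization on the section).} Let $T_+$ be the period of $\T\times\{1\}$, fix $\be_+>0$, and set $\lam:=e^{-\be_+T_+}\in\,]0,1[$ and $L(r):=\lam r$. A classical fundamental-domain argument, exploiting the fact that two strict topological contractions of an interval sharing a common fixed endpoint are topologically conjugate, produces an orientation-preserving homeomorphism $\chi_\Sig:[0,\de']\rit[0,\de']$ with $\chi_\Sig(0)=0$ and $\chi_\Sig\circ h_+=L\circ\chi_\Sig$.

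\textbf{Step 3 (Flow extension).} Choose $\al_+$ so that $\psi_+$ realizes a suspension of $L$ compatible with the parametrization of $\T$. For $z\in\Sig_+$ and $s\in[0,T_+[$, define $\chi_+(\phi^s(z)):=\psi_+^s(\chi_\Sig(z))$; this is consistent because $\phi^{T_+}|_{\Sig_+}=h_+$ and $\chi_\Sig$ intertwines $h_+$ with $L$, and automatically forces $\chi_+\circ\phi^t=\psi_+^t\circ\chi_+$. After shrinking $\de'$ and rewriting in suitable coordinates on a neighborhood of $\T\times\{1\}$, one obtains a homeomorphism of the form stated in (1).

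\textbf{Step 4 (Unstable case).} For $\T\times\{0\}$, the return map $h_-$ is a strict expansion; apply Step 2 to $h_-\inv$ with an exponent $\be_->0$, and Step 3 with a suitable $\al_-$, to obtain $\chi_-$ on a one-sided neighborhood $\T\times[0,\de]$. Since $\T\times[0,\de]$ is not forward-invariant, extend $\chi_-$ to $\phi^{-t}(\T\times[0,\de])$ by
\[
\chi_-(p):=\psi_-^{-t}(\chi_-(\phi^t(p))), \qquad p\in\phi^{-t}(\T\times[0,\de]),
\]
which is consistent for different admissible $t$ by the conjugacy equation on the initial neighborhood and yields exactly the commutative diagram of (2).

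The main obstacle is carrying out the flow-extension coherently, particularly in (2), where the natural domain of $\chi_-$ is not forward-invariant and the conjugacy must be propagated by following orbits backward. The underlying topological linearization on the transverse section is classical, so the substantive work lies in matching the suspension structure of $\phi$ with that of $\psi_\pm$ and in verifying the enlargement of the domain in (2).
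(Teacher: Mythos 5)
The paper does not prove this proposition; it cites Godbillon's \emph{Dynamical systems on surfaces} \cite{God83} and uses the result as a black box, so there is no in-paper argument to compare your proposal against.

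Your Steps 1 and 2 are fine: the Poincar\'e return map $h_+$ on a one-sided transversal is a topological contraction fixing the endpoint, hence conjugate to $r\mapsto \lam r$ by the usual fundamental-domain argument. The gap is in Step 3 (and is inherited by Step 4). You define $\chi_+(\phi^s(z)):=\psi_+^s(\chi_\Sig(z))$ for $s\in[0,T_+[$ and justify consistency with the claim ``$\phi^{T_+}|_{\Sig_+}=h_+$''. That identity is false in general: the first-return time $\tau(z)$ of $\phi$ to $\Sig_+$ equals $T_+$ only at the fixed point $a$ of $h_+$, and for $z\neq a$ the point $\phi^{T_+}(z)$ is not on $\Sig_+$ at all, since $h_+(z)=\phi^{\tau(z)}(z)$, not $\phi^{T_+}(z)$. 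Meanwhile the model $\psi_+$ has \emph{constant} return time $1/\al_+$ on each vertical section. So the two suspensions glue at different values of $s$, and the formula for $\chi_+$ does not descend to a well-defined map, let alone a time-preserving conjugacy $\chi_+\circ\phi^t=\psi_+^t\circ\chi_+$. Repairing this requires absorbing the discrepancy $\tau-T_+$ into the conjugacy (equivalently, tilting the section so the return time becomes constant), which amounts to solving a coboundary equation $g(z)-g(h_+(z))=\tau(z)-T_+$ with $g$ continuous and vanishing at $a$, and then verifying that $g$ extends continuously to the periodic orbit. That coboundary step is the substantive content separating a genuine flow conjugacy from the mere orbit equivalence that a suspension argument gives for free, and it is precisely what your proposal omits.
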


\begin{rem} \label{pointerranttype12-sectiontransverse}
1) The annulus $A$ is of type (I) if and only if $\al_+\al_->0$, and  of type (II) if and only if $\al_+\al_-<0$.

2) Any point in a component of type (II) that is not in a periodic orbit is wandering.

3) When the flow is of type (I) or (III), it admits a global transverse section that joins the boundaries $\T\times\{0\}$ and $\T\times \{1\}$. 
We can moreover chose such a section to be a $C^1$ submanifold.
This does not hold for flows of type (II), even if there still exist $C^1$ global transverse sections.
\end{rem}

Consider now a nonvanishing $C^1$ vector field $X$ on $\T^2$ that possesses periodic orbits. Assume that there are at least two periodic orbits. Then all periodic orbits are homotopic and $\T^2$ is a finite or countable union of domains $D$ bounded by two periodic orbits such that the flow $\phi_X$ of $X$ is conjugate to the flow of a component of type (I), (II) or (III). 
We say that such a domain $D$ is a \emph{zone of type} (I), (II) or (III).
If there is only one periodic orbit $\ga$, then the flow on $\T^2\setm \ga$ is conjugate to the flow of component of type (I) on the open annulus $\T\times ]0,1[$.
Conversely, if all the orbits are periodic, the flow is conjugate to the flow of a component of type (III) in a annulus $\T\times [0,1]$ with identification of the two boundaries. 
\vspace{0.2cm}

Before proving Proposition \ref{periodicorbit}, we compute the polynomial entropy for systems in components of type (I), (II) or (III).

\begin{prop}\label{type3}
Let $A$ be plane annulus and $Y$ be a vector field on $A$ such that $A$ is a component of type \emph{(III)}. Let $\phi$ be the flow of $Y$. Then $\hp(\phi)\in \{0,1\}$ and $\hp(\phi)=0$ if and only if $\phi$ is conjugate to a rotation.
 \end{prop}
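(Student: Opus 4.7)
The plan is to conjugate $\phi$ to a standard twist flow and then split into the cases of constant versus nonconstant twist. Since $A$ is of type (III), every orbit of $Y$ is a closed curve and these closed curves foliate $A$; by Remark~\ref{pointerranttype12-sectiontransverse}(3), $\phi$ admits a $C^1$ global transverse section $\Sigma$ joining the two boundary circles. I parametrize $\Sigma$ by $r\in[0,1]$ so that each periodic orbit meets $\Sigma$ exactly once, and denote by $T(r)>0$ the period of the orbit through $\Sigma(r)$; by the implicit function theorem applied to the first-return map, $T$, and hence $\omega:=1/T$, are $C^1$. The map $h:A\to\T\times[0,1]$ sending $\phi^s(\Sigma(r))$ to $(s/T(r)\bmod 1,\,r)$ is then a homeomorphism conjugating $\phi$ to
\[
\psi^t(\theta,r)=(\theta+t\omega(r),\,r).
\]
By Property~\ref{ptehphw}(1), $\hp(\phi)=\hp(\psi)$. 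If $\omega\equiv\omega_0$, then $\psi$ is the rotation flow $(\theta,r)\mapsto(\theta+t\omega_0,r)$, so $\phi$ is conjugate to a rotation and $\hp(\phi)=0$. If $\omega$ is nonconstant, then $\phi$ cannot be conjugate to any rotation (a rotation has periodic orbits all of the same period, whereas $\phi$ has orbits of period $1/\omega(r)$), so it remains to show $\hp(\psi)=1$.

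For the upper bound $\hp(\psi)\leq 1$, let $L$ be a Lipschitz constant for $\omega$ and fix $\epsilon\in(0,1/2)$. For $t$ large, partition $[0,1]$ into $\lceil 3tL/\epsilon\rceil$ intervals of length at most $\epsilon/(3tL)$ and $\T$ into $\lceil 3/\epsilon\rceil$ arcs of length at most $\epsilon/3$, forming $O(t/\epsilon^2)$ product rectangles. For $(\theta,r),(\theta',r')$ in the same rectangle and $u\in[0,t]$, with the sup metric on $\T\times[0,1]$,
\[
d\bigl(\psi^u(\theta,r),\psi^u(\theta',r')\bigr)\leq\max\bigl(|\theta-\theta'|+uL|r-r'|,\,|r-r'|\bigr)\leq \tfrac{\epsilon}{3}+tL\cdot\tfrac{\epsilon}{3tL}=\tfrac{2\epsilon}{3},
\]
so each rectangle has $d_t^\psi$-diameter at most $\epsilon$. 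Hence $D_t^\psi(\epsilon)=O(t)$ and $\hp(\psi)\leq 1$.

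For the lower bound, choose $a<b$ in $[0,1]$ with $\omega(a)<\omega(b)$ and set $\Delta:=\omega(b)-\omega(a)>0$. For $t$ large, the intermediate value theorem provides $r_0<r_1<\cdots<r_{N-1}$ in $[a,b]$ with $\omega(r_{k+1})-\omega(r_k)=2\epsilon/t$, where $N=\lfloor t\Delta/(2\epsilon)\rfloor$. Setting $P_k:=(0,r_k)$, I claim the $P_k$ form a $(t,\epsilon)$-separated set. Indeed, for $k<k'$ with $L:=k'-k\geq 1$ and $u:=t/(2L)\in(0,t]$, the angular difference between $\psi^u(P_k)$ and $\psi^u(P_{k'})$ is $u(\omega(r_{k'})-\omega(r_k))=\epsilon$, whose distance from $0$ in $\T$ is $\epsilon$ (as $\epsilon<1/2$). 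Hence $d_t^\psi(P_k,P_{k'})\geq\epsilon$, so $S_t^\psi(\epsilon)\geq N$ is of order $t$, yielding $\hp(\psi)\geq 1$.

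The main technical hurdle is the normal form step: establishing a $C^1$ global transverse section, checking that each orbit meets it exactly once in the type (III) setting, and verifying that the resulting coordinate change is a continuous flow conjugacy with $C^1$ speed function. After that, the covering and separation estimates for the shear flow $\psi$ are routine quantitative arguments.
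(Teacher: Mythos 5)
Your normal‐form step is the same as the paper's: both use the $C^1$ global transverse section for a type (III) component (Remark~\ref{pointerranttype12-sectiontransverse}(3)), parametrize it by $r\in[0,1]$, extract the $C^1$ return‐time $\tau$ (your $T$), set $\omega=1/\tau$, and build an explicit flow conjugacy onto the shear flow $\psi^t(\theta,r)=(\theta+t\omega(r),r)$. The divergence is in how the polynomial entropy of $\psi$ is then computed. The paper at that point simply invokes Proposition~\ref{AA} (Marco's formula $\hp(\psi)=\max_{[0,1]}\rk d\omega$ for action–angle flows, from~\cite{Mar-09}) and reads off the dichotomy $\hp\in\{0,1\}$ with $\hp=0$ exactly when $\omega$ is constant. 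You instead reprove this special case directly: the partition of $\T\times[0,1]$ into $O(t/\eps^2)$ rectangles of size $\eps/3$ by $\eps/(3tL)$ gives the upper bound $D_t^\psi(\eps)=O(t)$, and the $(t,\eps)$-separated family $P_k=(0,r_k)$ with equispaced values $\omega(r_k)$ (using $u=t/(2(k'-k))$ as the witnessing time) gives the lower bound $S_t^\psi(\eps)\gtrsim t$. Both the covering and separation estimates are correct; in particular your choice of a pair-dependent time $u\le t$ is legitimate for establishing $d_t^\psi$-separation, and the period argument ruling out conjugacy to a rotation is sound (though, as you implicitly use, it also follows from $\hp$-invariance once $\hp(\psi)=1$). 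So the overall structure agrees with the paper, but your proof is self-contained where the paper delegates the crucial entropy computation to an external result; the trade-off is that the paper's reference covers the general action–angle case in any dimension, while your argument is tailored to the $1$-frequency shear and is more elementary.
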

 
The proof of Proposition \ref{type3} is based on the following result on the polynomial entropy for action-angle systems proved in \cite{Mar-09} in a more general case.

\begin{prop}\label{AA}
Let  $\om : [0,1]\rit \R$ be a $C^1$ function and let $\psi$ be the flow on $\T\times [0,1]$ defined by $\psi^t(\th,r)=(\th+t\om(r),r)$. 
Then $\hp(\phi):=\max_{[0,1]}\rk d\om$.
\end{prop}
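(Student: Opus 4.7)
The plan is to split according to whether $\omega$ is constant on $[0,1]$. Since $\omega:[0,1]\to\R$ is $C^1$, the quantity $\max_{[0,1]}\rk d\omega$ equals $0$ exactly when $\omega$ is constant and $1$ otherwise, so I have to show that $\hp(\psi)=0$ when $\omega$ is constant and $\hp(\psi)=1$ when it is not. The first case is immediate: if $\omega\equiv\omega_0$, then every $\psi^t$ is an isometry of $\T\times[0,1]$, so $d_t^\psi=d$ for every $t>0$, the numbers $G_t^\psi(\eps)$ are bounded uniformly in $t$, and $\hp(\psi)=0$.

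From now on assume $\omega$ is not constant. Pick $r_0\in[0,1]$ with $\omega'(r_0)\neq 0$, fix a closed subinterval $J\subset[0,1]$ containing $r_0$ on which $|\omega'|\ge c>0$, and set $K:=\max_{[0,1]}|\omega'|$. For the upper bound $\hp(\psi)\le 1$, I fix $\eps\in(0,1/2)$ and $t>0$ and build a cover by product rectangles: partition $\T$ into $\lceil 4/\eps\rceil$ arcs of length at most $\eps/2$ and $[0,1]$ into $\lceil 4Kt/\eps\rceil$ subintervals of length at most $\eps/(4Kt)$, giving $O(t/\eps^2)$ rectangles. If $(\theta_1,r_1)$ and $(\theta_2,r_2)$ lie in a common rectangle $R$, then $|\omega(r_1)-\omega(r_2)|\le K|r_1-r_2|\le \eps/(4t)$ and, for all $0\le s\le t$,
\[
|\theta_1-\theta_2+s(\omega(r_1)-\omega(r_2))|\le \tfrac{\eps}{2}+t\cdot\tfrac{\eps}{4t}\le \eps,
\]
so $R$ has $d_t^\psi$-diameter at most $\eps$. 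Hence $D_t^\psi(\eps)=O(t)$ and $\hp(\psi)\le 1$.

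For the lower bound I work inside $J$. Since $\omega|_J$ is a $C^1$ diffeomorphism onto $\omega(J)$ with $|\omega'|\ge c$, for $t$ large enough I can choose points $r_0,\dots,r_N\in J$ with $\omega(r_{k+1})-\omega(r_k)=2\eps/t$ and $N\ge\lfloor c|J|t/(2\eps)\rfloor$. For $i\neq j$, the continuous function $s\mapsto s|\omega(r_i)-\omega(r_j)|$ grows from $0$ to at least $2\eps$ as $s$ varies over $[0,t]$, so it takes the value $\eps$ at some $s^\ast\in[0,t]$, and since $\eps<1/2$ the corresponding $\T$-distance between the angular components of $\psi^{s^\ast}(0,r_i)$ and $\psi^{s^\ast}(0,r_j)$ equals $\eps$. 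Hence $\{(0,r_k)\}_{k=0}^N$ is $(t,\eps)$-separated, so $S_t^\psi(\eps)\ge c|J|t/(2\eps)-1$ and $\hp(\psi)\ge 1$. The main delicate point in both estimates is the asymmetry of scales: angles need to be discretized at scale $\eps$, but since radial differences contribute to angular drift at rate $\sim|\omega'|$ per unit time, the radial scale must instead be $\eps/t$; this ratio is exactly what produces the linear-in-$t$ bound and pins $\hp(\psi)$ to $1$.
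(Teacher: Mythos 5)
The paper does not actually prove Proposition~\ref{AA}; it invokes it as a black box, citing \cite{Mar-09} ``in a more general case'' and immediately uses it in the proof of Proposition~\ref{type3}. Your argument is therefore not a variation on the paper's proof but a self-contained substitute for the citation, and it is correct. The case split on $\rk d\omega$ (constant vs.\ non-constant $\omega$) correctly unpacks the stated formula, the upper bound via product rectangles with angular mesh $\sim\eps$ and radial mesh $\sim\eps/t$ is sound, and the lower bound via points with equispaced $\omega$-values and the intermediate value theorem is the standard separated-set construction; both lead cleanly to the $O(t)$ growth that pins $\hp(\psi)$ to $1$. Two small points you leave implicit but which go through: in the covering argument you bound only the angular drift, but the radial extent of each rectangle is $\eps/(4Kt)$, which is $\le\eps/4$ once $t\ge 1/K$, so the full $d_t^\psi$-diameter is indeed $<\eps$; and in the separation argument, the requirement $\eps<\tfrac12$ is exactly what makes the lift estimate descend to the circle metric, which you correctly note. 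This direct argument is arguably preferable to the citation, since the dependence on \cite{Mar-09} (an unpublished preprint) is one of the paper's weak points, and your two-page derivation could replace it entirely in the special case needed here.
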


\begin{proof}[Proof of Proposition \ref{type3}]
We will prove that the flow $\phi$ is $C^0$-conjugate to a flow $\psi$ of the form given in Proposition \ref{AA}. To do this we will use the existence of a $C^1$ global transverse section that joins the two boundaries of $\T\times [0,1]$.
Let $\Sig$ be such a section. The curve $\Sig$ is a graph over $\{0\}\times [0,1]$ so it is parametrized by $r\in [0,1]$. 
We set $\Sig:=\{\sig(r)\,|\, r\in [0,1]\}$.
Since the orbits are periodic the Poincaré return map of $\Sig$ is the identity. We denote by $\tau$ the return time of $\Sig$, that is, for $r\in [0,1]$, $\sig(r)=\phi_X^{\tau(r)}(\sig(r))$.
The function $\tau$ is $C^1$ and never vanishes, so the function $\om :r \ma \frac{1}{\tau(r)}$ is well defined and $C^1$.
Let $\psi:=(\psi^t)_{t\in \R}$ be the flow on $\T\times [0,1]$ defined by $\psi^t(\th,r)=(\th+t\om(r),r)$.
To construct the conjugacy $\chi$ between $\phi$ and $\psi$, we first observe that for any $z\in \T\times [0,1]\setm \Sig$, there exists a unique $t_z>0$ such that $\phi^{-t_z}(z)\in \Sig$ and $\phi^{-t}(z)\notin \Sig$ when $0\leq t < t_z$.
We define $\chi$ in the following way:
\begin{itemize}
\item for $r\in [0,1]$, $\chi(\sig(r))=(0,r)$
\item for $z\in \T\times [0,1]\setm \Sig$, $\chi(z)=\psi^{t_z}\circ \chi \circ \phi^{-t_z}(z)$.
\end{itemize} 
Let $t\in \R$ and $z\in \T\times [0,1]$. Let $r$ be such that $\phi^{-t_z}(z)=\sig(r)$. There exists a unique $m\in \Z$ and a unique $s\in [0, \tau(r)[$ such that $t+t_z=m\tau(r)+s$.
Observe that $t_{\phi_X^t(z)}=s$ and that $\phi_X^{t-s}(z)=\sig(r)$.
Then 
\[
\chi\circ \phi^t(z) =\psi^s\circ \chi(\sig(r)) = (s\om(r),r).
\]
Now $s\om(r)\equiv s\om(r)+m=(s+m\tau(r))\om(r)$ and
\begin{align*}
 ((s + m\tau(r))\om(r),r) &= ( (t+t_z)\om(r),r)\\
&  = \psi^t\circ \psi^{t_z}\circ \chi(\sig(r))\\ 
 & = \psi^t\circ \psi^{t_z}\circ\chi\circ \phi^{-t_z}(z)= \psi^t\circ \chi(z).
\end{align*}
This proves that $\chi$ conjugates $\psi$ and $\phi$.
Then $\hp(\phi)\in\{0,1\}$  and $\hp(\phi)=0$ if and only if $\om$ is constant, that is, $\phi$ is conjugate to a rotation.
\end{proof}

\begin{prop}\label{type12}
Let $A$ be plane annulus and $Y$ be a vector field on $A$ such that $A$ is a component of type \emph{(I)} or \emph{(II)}. Let $\phi$ be the flow of $Y$. Then $\hp(\phi)=1$.
\end{prop}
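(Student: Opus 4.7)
The strategy is to establish $\hp(\phi)\geq 1$ from the existence of wandering orbits and $\hp(\phi)\leq 1$ by reducing to the polynomial entropy of a Poincar\'e return map, which has already been estimated in Section~3.

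\textit{Lower bound.} By Proposition~\ref{conj_hyp} applied near both boundary orbits, every interior point $z$ of $A$ has $\omega(z)$ equal to the asymptotically stable periodic orbit and $\alpha(z)$ equal to the unstable one. In type (I) this directly shows that $z$ is wandering for $\phi$, and for type (II) wandering of non-periodic points is the content of Remark~\ref{pointerranttype12-sectiontransverse}(2). Since wandering points of $\phi$ are wandering for $\phi^1$, Proposition~\ref{pointerrant} applied to $\phi^1$ yields $\hp(\phi)=\hp(\phi^1)\geq 1$.

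\textit{Upper bound via a global section.} By Remark~\ref{pointerranttype12-sectiontransverse}(3) the flow $\phi$ admits a global $C^1$ transverse section $\Sigma$ with Poincar\'e return map $h$ and continuous return time $\tau:\Sigma\rit\R^{+*}$ satisfying $0<\tau_{\min}\leq \tau\leq \tau_{\max}<\infty$. In type (I), $\Sigma$ is an arc joining $\T\times\{0\}$ to $\T\times\{1\}$, hence homeomorphic to $[0,1]$, and the orientation-preserving homeomorphism $h$ of $[0,1]$ fixes the endpoints while having no further fixed point (otherwise $\phi$ would have additional periodic orbits). Lemma~\ref{prop:indseg} applies and its proof furnishes, for every $\epsilon'>0$ and $n\in \N^*$, a cover of $\Sigma$ by at most $c'n+d'$ sets of $d_n^h$-diameter at most $\epsilon'$. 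In type (II), the opposite boundary orientations prevent any section from joining the two boundary circles, so $\Sigma$ is necessarily a closed curve and $h\in\Hom^+(\T)$; the intersections of $\Sigma$ with $\T\times\{0,1\}$ are finite sets of periodic points of $h$, and $h$ cannot be conjugate to a rotation (otherwise every orbit of $\phi$ would be periodic, making $A$ of type (III)), so Proposition~\ref{rationel} yields the same linear bound $D_n^h(\epsilon')\leq c'n+d'$.

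\textit{Transferring the bound and main obstacle.} Given $\epsilon>0$ and $T$ large, set $N=\lceil T/\tau_{\min}\rceil+1$. Uniform continuity of $(t,z)\mapsto \phi^t(z)$ on the compact set $[0,\tau_{\max}]\times A$ together with continuity of $\tau$ allow one to choose $\epsilon',\epsilon''>0$ such that: whenever $x,y\in\Sigma$ satisfy $d_N^h(x,y)\leq \epsilon'$ and $s,s'\in[0,\tau_{\max}]$ satisfy $|s-s'|\leq \epsilon''$, the orbits issued from $\phi^s(x)$ and $\phi^{s'}(y)$ stay within distance $\epsilon$ throughout $[0,T]$. Covering $\Sigma$ by at most $c'N+d'$ such sets, subdividing $[0,\tau_{\max}]$ into $\lceil \tau_{\max}/\epsilon''\rceil$ subintervals, and flowing every combination then produces a cover of $A$ by at most $CT+D$ sets of $d_T^{\phi}$-diameter at most $\epsilon$, whence $\hp(\phi)\leq 1$. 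The only delicate point is precisely this quantitative transfer: one must control both the phase (via $\epsilon''$) and the successive return positions (via $\epsilon'$) to ensure uniform closeness of the flow-orbits on $[0,T]$; the bounded return time and the uniform continuity over one return period make this routine but it must be carried out carefully.
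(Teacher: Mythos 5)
Your lower bound and the type (I) reduction to Lemma~\ref{prop:indseg} are in the right spirit, but the proposal has a genuine gap concentrated on the type (II) case, plus a second, quantitative gap in the transfer step that affects type (I) as well.

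For type (II), the premise that $\Sigma$ is a closed curve carrying a Poincar\'e return map $h\in\Hom^+(\T)$ is false. By Remark~\ref{pointerranttype12-sectiontransverse}(3), a global $C^1$ transverse section for a type (II) component does \emph{not} meet the boundary circles (a closed transversal cannot touch a periodic boundary orbit without being tangent to the flow there); so your assertion that ``the intersections of $\Sigma$ with $\T\times\{0,1\}$ are finite sets of periodic points of $h$'' already fails. Worse, $\Sigma$ is then an embedded closed curve in $\Int A$, necessarily isotopic to the core (a null-homotopic transversal would force a zero of $Y$ by index theory). Since $\Sigma$ is transverse to the flow, every crossing of $\Sigma$ by an orbit has the same sign, so geometric and algebraic intersection numbers agree; but an interior orbit is a proper arc from $\T\times\{0\}$ to $\T\times\{1\}$, which has algebraic intersection number $\pm1$ with $\Sigma$. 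Hence each interior orbit meets $\Sigma$ \emph{exactly once}: there is no return map, the flow on a type (II) annulus is not a suspension, and the appeal to Proposition~\ref{rationel} is vacuous.

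Even for type (I), the transfer is not routine. Over $N$ returns the cumulative times $\sum_{j<k}\tau(h^j(x))$ and $\sum_{j<k}\tau(h^j(y))$ of two $d_N^h$-close points can drift apart by an amount of order $N$ times the oscillation of $\tau$ on a $d_N^h$-ball; the orbits slip out of phase, and a fixed subdivision of $[0,\tau_{\max}]$ no longer ensures $d_T^\phi$-closeness. One therefore cannot choose $\eps',\eps''$ depending on $\eps$ alone; controlling this drift is precisely what the $L^1$ comparison machinery of Section~4.1 (Lemma~\ref{lemmecomparaisonL1}, Corollary~\ref{corollaire}, Lemma~\ref{minorationaire}) is for, and the paper invokes it whenever it does take the return-map/suspension route (Proposition~\ref{without}, Lemma~\ref{hpdansS}). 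For Proposition~\ref{type12} itself the paper avoids both problems by not reducing to a global section at all: it uses the local linearizing conjugacies of Proposition~\ref{conj_hyp} near each boundary orbit and covers $A$ directly in three pieces (a stable collar, a compact middle band, an unstable collar), reproducing the structure of the proof of Lemma~\ref{prop:indseg} at the level of the flow rather than a return map.
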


\begin{proof}
We first observe that since $\phi$ possesses wandering points, $\hp(\phi)\geq 1$. Let us prove that $\hp(\phi)\leq 1$. As usual, for $\eps>0$ fixed and $n$ large enough, we construct a cover of $A$ by sets with $d_n^\phi$-diameter less than $\eps$ with cardinal of the form $cn+d$ with $c,d$ depending only on $\eps$. The idea is essentially the same as in the proof of lemma \ref{prop:indseg}.

We set $\ga_-:=\T\times \{0\}$, $\ga_+:=\T\times \{1\}$ and we assume that $\ga_+$ is asymptotically stable.
Let $\de>0$ be given by Proposition \ref{conj_hyp} and fix $\eps\in\, ]0, \de[$. 
We set $\CC_+:=\T\times [1-\eps,1], S_+:=\T\times\{1-\eps\}, \CC_-:=\T\times [0, \eps]$ and $S_-:=\T\times \{\eps\}$. 
Observe that $S_-$ and $S_+$ are global transverse sections of $\phi$.
Finally we set $\II:= \ov{A\setm \left (\CC_- \cup \CC_+\right)}$.
\vspace{0.2cm}

\noindent $\bullet$ \textit{Cover of $\CC_+$:} Let $\chi_+$ and $(\psi_+^t)$ be such as in Proposition \ref{conj_hyp} 1). 
Observe that for all $t>0$ and all $(z,z')\in \CC_+^2$, $d(\psi^t(z),\psi^t(z'))\leq d(z,z')$.
Let $\eta>0$ such that for all $(z,z')\in \CC_+^2$, if $d(z,z')\leq \eta$ then $d(\chi_+\inv(z),\chi_+\inv(z))\leq \eps$, and let $\eta'$ such that if $d(z,z')\leq \eta'$ then $d(\chi_+(z),\chi_+(z))\leq \eta$.
Let $\eta_0=\min (\eta, \eta',\eps)$.

Since for all $t>0$, $d(\phi^t(z),\phi^t(z'))=d(\chi_+\inv\circ \psi\circ \chi_+(z),\chi_+\inv\circ \psi\circ \chi_+(z))$, any subset of $\CC_+$ with diameter less than $\eta_0$ has $d_n^\phi$-diameter less than $\eps$ for any $n\in \N$. 
By compactness of $\CC_+$, we can cover $\CC_+$ by a finite number $m$ (independent of $n$) of balls of radius less than $\demi\eta_0$. 
\vspace{0.2cm}

\noindent $\bullet$ \textit{Cover of $\II$:} Let  $\wp$ be the Poincaré map between $S_-$ and $S_+$ and let $\tau$ be its time function, that is, for $z\in S^-$, $z\in S_-$, $\wp(z)=\phi^{\tau(z)}(z)$.  
The function, $\tau$ is continuous on $S^-$.  
We set $\tau_m:=\max\tau$. 
Then for all $n>\tau_m$ and for all $z\in \II$, $\phi^n(z)\in \CC_+$. 
Fix $n_0\geq \tau_m$. 
There exists $\eta_1\in\, ]0,\eta_0]$ such that for all $(z,z')\in \II^2$ and all $n\in \{0,\dots, n_0\}$, if $d(z,z')\leq \eta_1$, then $d(\phi^n(z),\phi^n(z'))\leq \eta_0$.
Then, any subset of $\II$ with diameter less than $\eta_1$ has $d_N^\phi$-diameter less than $\eps$ for all $N\in \N$.
Again, since $\II$ is compact, we can cover it by a finite number $p$ (independent of $N$ in $\N$) of balls of radius less than $\demi\eta_1$.
\vspace{0.2cm}

\noindent $\bullet$ \textit{Cover of $\CC_-$:} Fix $n\in \N$. Let $B_1,\dots, B_p$ the balls of the previous cover of $ \II$. 
Let $D$ be the compact domain contained in $\II$ delimited by $S_-$ and $\phi(S_-)$. Then $\phi\inv(D)$ is covered by a finite number $r_1\leq p$ of domains of the form $\phi\inv(B_j)$, for $1\leq j \leq p$. Such a domain can be cover by a finite number of balls with diameter $\eps$. Let $s_1$ be the maximal number of such a covering. Then we get a cover of $\phi\inv(D)$ by at most $q_1=s_1r_1$ balls with $d_N^\phi$-diameter less than $\eps$ for all $N\in \N$.
Considering the inverse images of these balls by $\phi$, and covering again each of these sets by a finite number of balls with diameter $\eps$, we get a cover of $\phi^{-2}(D)$ by a finite number $q_2$ (independent of $N$ in $\N$) of balls with $d_N^\phi$-diameter less than $\eps$ for all $N\in \N$.
Iterating the processus, for all $1\leq k \leq n$, we obtain a cover  of $\phi^{-k}(D)$ with a finite number $q_k$ (depending only on $\eps$) of balls   with $d_N^\phi$-diameter less than $\eps$ for all $N\in \N$. Let $q=\max\{q_1,\dots,q_n\}$. We have got a cover of the domain bounded by $S_-$ and $\phi^{-n}(S_-)$ with at most $nq$ balls with $d_N$-diameter less than $N$, for all $N\in \N$.

It remains to cover the domain $\De_-$ bounded by $\ga_-$ and $\phi^{-n}(S_-)$. 
Let $(\psi_-^t)$ and $\chi_-$ such as in Proposition \ref{conj_hyp} 2).
Let $z\in \De_-$. Then for all $1\leq k \leq n$, $\phi^k(z)\in \CC_-$ and $\chi\circ \phi^k(z)=\psi^k\circ \chi(z)$.
So $\psi^k(\chi(\De_-))\subset \CC_-$. 
Therefore $\chi(\De_-)\subset \T\times [0, \eps e^{-n\be_-}]$.
Let $\eta>0$ such that if $d(z,z')\leq \eta$, then $d(\chi_-\inv(z), \chi_-\inv(z'))\leq \eps$.

Let $I_1, \dots, I_\ell$ be a finite cover of $\T$ by compact intervals of length less than $\eta$.
Let $\ka \in \N$  with $\ka \leq \frac{\eps}{\eta}+1$ and $0=r_0<r_1\dots <r_\ka =\eps$ such that 
\[
r_{i+1}-r_i \leq \eta \quad {\rm{and}}\quad \bigcup_{1=i}^{\ka-1} [r_i, r_{i+1}]=[0, \eps].
\]
Let $r_i'=r_i e^{-n\be_-}$. Then 
\[
r_{i+1}'-r_i' \leq \eta e^{-n\be_-} \quad {\rm{and}}\quad \bigcup_{1=i}^{\ka-1} [r_i', r_{i+1}']=[0, \eps e^{-n\be_-}].
\]
For $(j,i)\in \{1,\dots, \ell\}\times\{1, \dots, \ka\}$, we set $D_{ji}:=I_j\times[r_i, r_{i+1}]$. 
The sets $D_{ji}$ cover $\chi(\De_-)$ and one immediately checks that each $D_{ji}$  has $d_n^\psi$-diameter less than $\eta$.
Now, each of the sets $\chi\inv(D_{ji})$ can be covered by a finite number (independent of $n$) of sets  with diameter less than $\eps$. 
By construction, such subsets cover $\De_-$ and have $d_n^\phi$-diameter less than $\eps$. The cardinal $r$ of this cover is independent of $n$.
\vspace{0.2cm}

Finally, we get a cover of $A$ with $nq +m +p +r$ sets of $d_n^\phi$-diameter less than $\eps$, where $q,m,p$ and $r$ only depend on $\eps$. This proves that $\hp(\phi)\leq 1$.
\end{proof}

\begin{rem}\label{nombrefini}
Observe that using proprery \ref{ptehphw} 5) of $\hp$, Proposition \ref{periodicorbit} is proved in the case when  $\T^2$ is covered by a \emph{finite} union of zones of type (I), (II) or (III).
\end{rem}

The proof in the general case is based on the three following lemmas. The first one is a classical result, whose proof can be found in \cite{PdM82} for instance.

\begin{lem}\label{sectiontransverse}
Each periodic orbit $\Ga$ admits a transverse section $\Sig$, with $\{a\}:=\Ga\cap\Sig$ such that there exists an open neighborhood $O\subset \Sig$ of $a$, such that the Poincaré return map $\wp : O\rit \Sig$ is well defined.
\end{lem}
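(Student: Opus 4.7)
The plan is to use the implicit function theorem, which is the standard route to constructing Poincar\'e sections for flows on surfaces. Fix a point $a\in \Gamma$ and let $T>0$ be the minimal period of $\Gamma$. Since $X$ is $C^1$ and $X(a)\neq 0$, choose a $C^1$ embedded arc $\Sig$ through $a$ that is transverse to $X(a)$; for instance, in a local chart at $a$ identifying a neighborhood of $a$ in $\T^2$ with an open set of $\R^2$, take $\Sig$ to be the image of a small straight segment through $a$ whose tangent direction is not collinear with $X(a)$. By continuity of $X$, after shrinking $\Sig$ one may assume $X$ is transverse to $\Sig$ at every point of $\Sig$.

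Next I would consider the $C^1$ map $F:\Sig\times \R \rit \T^2$ defined by $F(z,t)=\phi_X^t(z)$. Work in a local chart $U$ around $a$ in which $\Sig\cap U$ is defined by the vanishing of a $C^1$ function $\lam : U\rit \R$ with $d\lam(a)\neq 0$, chosen so that $\ker d\lam(a)=T_a\Sig$. Then the composition
\[
h : \Sig\times \R \lrit \R, \qquad h(z,t)=\lam(F(z,t))
\]
is $C^1$ on a neighborhood of $(a,T)$, satisfies $h(a,T)=\lam(a)=0$ because $\phi_X^T(a)=a\in \Sig$, and has partial derivative
\[
\Dp{h}{t}(a,T)=d\lam(a)\cdot X(\phi_X^T(a))=d\lam(a)\cdot X(a)\neq 0
\]
by the transversality of $X(a)$ to $\Sig$.

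By the implicit function theorem, there exists an open neighborhood $O\subset \Sig$ of $a$ and a $C^1$ function $\tau : O\rit \R$ with $\tau(a)=T$ such that $h(z,\tau(z))=0$ for all $z\in O$. After shrinking $O$ if necessary so that $F(z,\tau(z))$ stays in the chart $U$, the condition $\lam(F(z,\tau(z)))=0$ means exactly that $\phi_X^{\tau(z)}(z)\in \Sig$. Setting $\wp(z):=\phi_X^{\tau(z)}(z)$ gives the required Poincar\'e return map. The only point requiring a little care is to make sure the first return really occurs at time $\tau(z)$ and not earlier, which is ensured by taking $O$ small (so that for $z\in O$ the orbit $\{\phi_X^s(z)\,|\,s\in[0,\tau(z))\}$ leaves $U$ and only comes back to $\Sig$ near $\phi_X^T(a)=a$); this is where the actual work lies, but it is routine given the transversality and the continuity of the flow.
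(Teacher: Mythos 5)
The paper does not actually prove this lemma; it simply cites \cite{PdM82}, where the argument is exactly the implicit-function-theorem construction you give (transverse arc $\Sig$, return-time function $\tau$ solved from $d\lam(a)\cdot X(a)\neq 0$). Your proof is correct and is the standard one, including your (correct) remark that ensuring $\tau(z)$ is genuinely the first return requires shrinking $O$ and $\Sig$, which is routine from transversality, the minimality of the period $T$, and the fact that one may take $\Sig\cap\Ga=\{a\}$.
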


\begin{lem}\label{hpdansS}
Assume that $\wp$ has a fixed point $b\in O$ with $b\neq a$. Let $[a,b]_\Sig$ be the compact segment of $O$ limited by $a$ and $b$. 
Set
$
 S:=\bigcup_{t\in \R}\phi_X^t([a,b]_{\Sig}).
$
Then $\hp(\phi_X, S)\leq 1$.
\end{lem}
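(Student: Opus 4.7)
The approach is to view $S$ as a closed annular subset of $\T^2$ and to adapt to it the area-based cover argument of the proof of Proposition \ref{without}. Since $a$ and $b$ are both fixed points of $\wp$ lying in $O$, and $\wp$ is an orientation-preserving local homeomorphism of $\Sig$, the restriction $g:=\wp|_{[a,b]_\Sig}$ is an orientation-preserving homeomorphism of the compact interval $[a,b]_\Sig$ onto itself, fixing both endpoints. Consequently $[a,b]_\Sig$ is a compact global transverse section of $\phi_X|_S$; the set $S$ is closed and $\phi_X$-invariant, homeomorphic to a closed annulus whose two boundary circles are the periodic orbits $\Ga$ through $a$ and $\Ga_b$ through $b$; and the flow $\phi_X|_S$ is, up to reparametrization, the suspension of $g$ in the sense of Lemma \ref{suspension}.

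Fix $\eps>0$. The goal is to construct, for $T$ large, a cover of $S$ by at most $cT+d$ sets of $d_T^{\phi_X}$-diameter $\leq\eps$, with $c,d$ depending only on $\eps$; this yields $\hp(\phi_X,S)\leq 1$. Using Lemma \ref{suspension}, lift the annulus $S$ to an infinite strip $\widetilde S\subset\R^2$ bounded by two orbit graphs $G_{\vp_a}<G_{\vp_b}$ of a lifted vector field $\widehat X=(1,\widehat X_2)$, and let $\De$ denote the fundamental domain of $\widetilde S$ delimited by these graphs and the verticals $V_0,V_1$. One then repeats almost verbatim the argument of the proof of Proposition \ref{without}: using Lemma \ref{minorationaire}, select a finite family $\vp_a=\vp_{y_0}<\vp_{y_1}<\cdots<\vp_{y_\kappa}=\vp_b$ of orbit graphs with $\kappa\leq c_\eps T+1$ such that consecutive pairs bound strips of $L^1$-norm $\min(c_1\eps,\eps^2/18)$ on $[0,\mu_M T]$, and apply Corollary \ref{corollaire} together with an $x$-subdivision of step $c_0\eps$ to each strip. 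This produces a cover of $\De$ by $O(T)$ sets of $d_T^{\widehat\phi_X}$-diameter $\leq\eps$, and projecting back to $S$ gives the desired bound on $D_T^{\phi_X}(S,\eps)$.

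The main obstacle is that, unlike in Proposition \ref{without}, the set $S$ may contain interior periodic orbits, corresponding to additional fixed points of $g$ in $]a,b[_\Sig$. Near each such periodic orbit the dynamics is locally hyperbolic by Proposition \ref{conj_hyp}, two nearby orbit graphs converge exponentially fast, and the area lower bound of Lemma \ref{minorationaire} degenerates. To handle this, cover a small tubular neighborhood of each interior periodic orbit by the hyperbolic cover of the proof of Proposition \ref{type12} (which yields $O(T)$ sets of small $d_T^{\phi_X}$-diameter via the conjugacies of Proposition \ref{conj_hyp}), and apply the $L^1$-cover of the previous paragraph to the complement, where orbits remain uniformly bounded away from periodic ones so that Lemma \ref{minorationaire} applies. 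Patching these two covers, using the compactness of $[a,b]_\Sig$ together with the uniformity of the constants appearing in Proposition \ref{conj_hyp} and Corollary \ref{corollaire}, produces the required $O(T)$ bound and hence $\hp(\phi_X,S)\leq 1$.
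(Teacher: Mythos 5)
Your first two paragraphs correctly reproduce the paper's strategy: exhibit $[a,b]_\Sig$ as a global transverse section, build a suspension via Lemma \ref{suspension}, lift to a strip $\R\times[0,1]$ bounded by $\vp_0=\R\times\{0\}$ and $\vp_1=\R\times\{1\}$, and run the area--pigeonhole argument of Proposition \ref{without} using Lemma \ref{minorationaire} and Corollary \ref{corollaire}. The problem is the third paragraph: the ``obstacle'' you identify there is not a real one, and the extra machinery you bring in to circumvent it is both unnecessary and, in general, unavailable.

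Lemma \ref{minorationaire} does not degenerate near interior periodic orbits. Its proof relies only on the uniform slope bound $|\vp'|,|\psi'|\leq p=\max|X_2/X_1|$: the width $\psi-\vp$ can decrease at rate at most $2p$ on either side of a point $x_0$ where it is maximal on $[0,T]$, which forces $A[\vp,\psi,T]\geq \tfrac{1}{8p}\norm{\psi-\vp}^2_{C^0([0,T])}$ as soon as $T\geq \tfrac{1}{4p}\norm{\psi-\vp}_{C^0(\R)}$. This estimate is local in $x$ and is indifferent to whether $\vp$ and $\psi$ are asymptotic to a common periodic graph as $x\to+\infty$; exponential convergence of the graphs does not beat the linear slope bound near the point where they are farthest apart, and that transient is precisely what produces the area. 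Consequently the selection of $\vp_{y_0}<\cdots<\vp_{y_\ka}$ by fixed area increments works verbatim for the lifted suspension, yielding $\ka\leq c_\eps T+1$ strips of $C^0$-width at most $\eps/3$ on $[0,T]$, each of which Corollary \ref{corollaire} then covers by $O(1/\eps)$ sets of small $d_T$-diameter; this is exactly what the paper does, applying the proof of Proposition \ref{without} with no modification. Your proposed patch is not only superfluous but would also fail in general: the fixed points of $g$ in $]a,b[_\Sig$ may accumulate, or fill whole subintervals (giving subzones of type (III)), and at such periodic orbits Proposition \ref{conj_hyp} supplies no hyperbolic normal form, so the ``hyperbolic cover from Proposition \ref{type12}'' is simply not available there. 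Remove that paragraph; the first two already carry the argument. What actually remains to be done, and what you dispatch too quickly with ``up to reparametrization'', is the explicit construction of the map $\chi$ conjugating $\phi_X|_S$ to the suspension flow $\phi_\wp$, which the paper carries out in detail so as to transfer the estimate from $\phi_\wp$ back to $\phi_X|_S$.
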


\begin{proof}
Observe that the orbit of $b$ is periodic.
Let $\tau$ be the transition time of $\wp$, that is, for $z\in O$, $\wp(z)=\phi^{\tau(z)}(z)$. For any $z\in [a,b]_\Sig$, $\tau(z)>0$.
Let $\xi : [0,1]\to [a,b]_\Sig$ be a $C^1$ diffeomorphism with $\xi(0)=a$ and $\xi(1)=b$.
Let $\wp^*=\xi\inv\circ\wp\circ\xi:[0,1]\to[0,1]$, and let $\bar{\tau}:=\tau\circ \xi$.
We first note that the construction of a suspension in Lemma \ref{suspension} is still valid for a homeomorphism $f: [0,1]\rit [0,1]$.
Consider  a vector field $\ha{X}_\wp$ on $\R\times [0,1]$ with flow $\ha{\phi}_\wp$ such that 
\begin{itemize}
\item $\ha{X}_\wp(x_1,x_2)=(1, X_2(x_1,x_2))$ with $X_2(x_1+1,x_2)=X_2(x_1,x_2)$ for all $(x_1,x_2)\in \R\times [0,1]$. 
\item $\ha\phi_\wp^1(n,x_2)=(n+1, \wp(x_2))$ for all $(n,x_2)\in \Z\times [0,1]$.
\end{itemize}
The projection $X_\wp$ of $\ha{X}_\wp$ on $\T\times [0,1]$ is a suspension of $\wp^*$. We denote by $\phi_\wp$   its flow.
The orbits of $\ha{X}_\wp$ are graph over the $x_1$-axis, so Proposition \ref{L1Deviation} and Corollary \ref{corollaire} hold true.
For $x_1\in [0,1]$, we denote by $\vp_{x_1}$ the orbit of $(0,{x_1})$. 
Obviously, $\vp_0=\R\times\{0\}$ and $\vp_1=\R\times \{1\}$.
Therefore we can apply Lemma \ref{minorationaire} and the same proof than for Theorem \ref{without} yields $\hp(\phi_\wp) \leq 1$.

We will now show that the restriction of $X$ to $S$ is conjugate to $X_\wp$, which will conclude the proof.
The curve $\Sig^*:=\{0\}\times [0,1]$ is a transverse section of $X_\wp$ with Poincaré return map $P:(0,x)\ma (0, \wp^*(x))$. Then $P(0,x)=\phi_\wp^{\bar{\tau}(x)}(0,x)$.
For $z\in S\setm \Sig$, we set $t_z:=\inf\{t>0\,|\, \phi_X^{-t}(z)\in \Sig\}$.
We define a map $\chi : S\rit \T\times [0,1]$ in the following way:
\begin{itemize}
\item for $z\in \Sig$, $\chi(z)=(0,\xi\inv(z))$
\item for $z\in S\setm \Sig$, $\chi(z)=\phi_\wp^{t_z}\circ \chi \circ \phi_X^{-t_z}(z)$.
\end{itemize} 
Observe that for $z\in \Sig$ and $n\in \Z$, $\chi\circ \wp^n(z)=P^n\circ \chi(z)$.
Moreover if $\tau_m$ is such that $\phi_X^{\tau_m}(z)= \wp^m(z)$, then $P^m(\chi(z))= \phi_\wp^{\tau_m}(\chi(z))$.
Let $z\in S$ and $t\in \R$. Let $s=t_{\phi_X^t(z)}$ and $z_1:= \phi_X^{-s}(\phi_X^t(x))\in S$. Let $z_0=\phi_X^{-t_z}(z)$.
There exists a unique $m\in \Z$ such that $z_1=\wp^m(z_0)$. Then $\phi_X^{t}(z)=\phi_X^{s}(\wp^m(z_0))$.
Therefore
\begin{align*}
\chi\circ \phi_X^t(z) &= \chi \circ \phi_X^s(\wp^m(z_0)) = \phi_\wp^s \circ \chi \circ\phi_X^{-s}(\phi_X^s(\wp^m(z_0)))\\
 &=\phi_\wp^s\circ \chi(\wp^m(z_0))=\phi_\wp^s \circ P^m(\chi(z_0)).
\end{align*}
Let $\tau_m$ be such that $\wp^m(z_0)=\phi_X^{\tau_m}(z_0)$. Then $\phi_X^t(z)=\phi_X^{s+\tau_m}(z_0)$, so $t=s+\tau_m-t_z$. 
Therefore
\[
\phi_\wp^s \circ P^m(\chi(z_0))= \phi_\wp^{s+\tau_m}(\chi(z_0))=\phi_\wp^{t+t_z}(\chi(z_0))=\phi_\wp^t(\chi(z))
\]
and $\chi$ is a conjugacry between $\phi_X$ restricted to $S$ and $\phi_\wp$. This proves that $\hp(\phi_X,S)\leq 1$.
\end{proof}

\begin{lem} 
Any periodic orbit $\Ga$ of $X$ admits a compact neighborhood $\NN_\Ga$ in $\T^2$, $\phi_X$-invariant  such that 
$
\hp(\phi_X,\NN_\Ga)\leq 1.
$
\end{lem}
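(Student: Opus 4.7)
\emph{Plan.} The idea is to produce $\NN_\Ga$ as a union $S^+\cup S^-$ of two compact, $\phi_X$-invariant, one-sided closed neighborhoods of $\Ga$, each with polynomial entropy at most $1$, and then to conclude via property \ref{ptehphw} (5). First I apply Lemma \ref{sectiontransverse} to obtain a transverse section $\Sig$ through $\{a\} := \Ga \cap \Sig$ together with a Poincar\'e return map $\wp : O \rightarrow \Sig$ defined on an open neighborhood $O$ of $a$ in $\Sig$. Parametrizing $\Sig$ locally by an interval with $a$ at the origin, the two sides of $a$ in $\Sig$ are well defined, and the set $F \subset O$ of fixed points of $\wp$ is closed and contains $a$.

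On each side of $a$, the construction of $S^\pm$ depends on whether $F$ accumulates on $a$ from that side. Suppose first that $F$ accumulates on $a$ from the right: choose any $b \in F$ on the right arbitrarily close to $a$, and set $S^+ := \bigcup_{t\in\R}\phi_X^t([a,b]_\Sig)$. This is the compact annular region of $\T^2$ bounded by $\Ga$ and by the periodic orbit $\Ga_b$ through $b$; it is $\phi_X$-invariant, and Lemma \ref{hpdansS} yields $\hp(\phi_X, S^+) \leq 1$. Suppose instead that $a$ is isolated from the right in $F$: then the connected component $Z^+$ of $\T^2\setm\Per(\phi_X)$ bordering $\Ga$ on the right is an open annular zone, necessarily of type (I) or (II) (type (III) zones being entirely filled with periodic orbits), whose closure $S^+:=\overline{Z^+}$ is a compact plane annulus bounded by $\Ga$ and by another periodic orbit $\Ga^+$; Proposition \ref{type12} then gives $\hp(\phi_X, S^+)=1$. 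Applying the same alternative to the left side produces $S^-$, and I set $\NN_\Ga := S^+ \cup S^-$. This is a compact $\phi_X$-invariant neighborhood of $\Ga$, and since $S^+$ and $S^-$ are both $\phi_X$-invariant, property \ref{ptehphw} (5) yields $\hp(\phi_X,\NN_\Ga) = \max(\hp(\phi_X,S^+), \hp(\phi_X,S^-)) \leq 1$.

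The one delicate point I expect is the degenerate case in which $\Ga$ is the only periodic orbit of $X$: the ``isolated'' alternative then applies on both sides, but the two analyses lead to the same global zone $Z = \T^2\setm\Ga$, whose closure is all of $\T^2$ and not a plane annulus, so Proposition \ref{type12} does not apply directly. In that situation I simply take $\NN_\Ga := \T^2$ and appeal to Remark \ref{nombrefini}: $\T^2 = \Ga\cup Z$ is then a finite $\phi_X$-invariant decomposition, and combining property \ref{ptehphw} (5) with the conjugacy of $\phi_X\vert_Z$ to the interior of a type (I) flow on a plane annulus (to which Proposition \ref{type12} does apply) yields $\hp(\phi_X) \leq 1$.
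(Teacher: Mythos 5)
Your proof is correct and reaches the same conclusion via a cleaner, more symmetric case split than the paper's. The paper organizes its argument around the global structure of the set $\ha{\PP}$ of zone-boundary periodic orbits: it first disposes of the cases where $\Ga$ lies in the interior of a type (III) zone or where $\ha{\PP}$ is finite (via Propositions \ref{type3} and \ref{type12} and Remark \ref{nombrefini}), and then treats the hard case where $\ha{\PP}$ is infinite and $\Ga$ is an accumulation point of it, splitting on whether one or both components of $\Sig\setminus\{a\}$ meet $\ha\Pi$ infinitely often. You instead work locally and symmetrically on each side of $a$ in $\Sig$, splitting on whether the fixed-point set $F$ of $\wp$ accumulates on $a$ from that side. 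This has two advantages. First, the type (III) situation (where $F$ actually contains a whole interval through $a$) is automatically absorbed into the ``accumulation'' alternative, since Lemma \ref{hpdansS} requires nothing beyond the existence of a second fixed point $b\in O$; you therefore never need Proposition \ref{type3} inside this lemma. Second, in the ``isolated'' alternative your choice $S^\pm=\overline{Z^\pm}$ sidesteps the paper's subdivision of $\Sig_f$ into the subcases $\Sig_f\cap\ha\Pi=\emptyset$ versus $\Sig_f\cap\Pi\neq\emptyset$, replacing it by the single observation that the adjacent zone is of type (I) or (II) and its closure is a compact annulus. Your treatment of the degenerate case of a unique periodic orbit (taking $\NN_\Ga=\T^2$ and invoking the conjugacy to a type (I) flow on the open annulus, as discussed just before Proposition \ref{type3}) is the correct patch for the one place where the ``two-sided gluing'' picture breaks down. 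Both proofs rest on the same ingredients --- Lemmas \ref{sectiontransverse} and \ref{hpdansS}, Proposition \ref{type12}, and property \ref{ptehphw}(5) --- so what you gain is mainly economy and transparency of the case analysis, not a genuinely new mechanism.
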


\begin{proof} We denote by $\PP$ the set of periodic orbits. For $\Ga \in \PP$  the two following cases only occur.
\begin{enumerate}
\item $\Ga$ is in the interior of a zone of type (III)
\item $\Ga$ is the common boundary of two zones of type (I), (II) or (III), with at most one of each of type (III). We denote by $\ha{\PP}$ the subset of these orbits. It is a finite or countable subset.
\end{enumerate}
In the first case, the lemma is proved by Proposition \ref{type3}: one just has to choose for $\NN_\Ga$ the zone of type (III) that contains $\Ga$.
When $\ha{\PP}$ is finite, the lemma is immediate by Propositions \ref{type3} and \ref{type12} (see remark \ref{nombrefini}) : for $\Ga \in \ha{\PP}$ one can choose for $\NN_\Ga$ the union of the two zones of type (I), (II) or (III) with common boundary $\Ga$.
The only difficulty occurs when $\ha{\PP}$ is infinite and when $\Ga$ is an \emph{accumulation point in $\ha{\PP}$}, that is, there exists a sequence  $(\Ga_n)_{n\in \N} \in \ha{\PP}^\N$ such that  $d(\Ga, \Ga_n) \rit_{n\rit \infty}=0$ where $d$ is the Hausdorf distance between two compacts subsets of $\T^2$ induced by the classical Euclidean distance of $\R^2$. 
\vspace{0.2cm}

Assume that $\ha{\PP}$ is infinite and let $\ha\Pi$ be the union of the periodic orbits in $\ha{\PP}$.
Let $\Ga$ be an accumulation point in $\ha{\PP}$. 
Fix $a\in \Ga$ and let $\Sig$ be a curve transverse to $\Ga$ in $a$ such as in lemma \ref{sectiontransverse}. 
Then at least one of the connected component of $\Sig\setm\{a\}$ has an infinite intersection with $\ha\Pi$. We distinguish the cases when both connected component have an infinite intersection with $\Pi$  and when only one has an infinite intersection with $\ha\Pi$.
\vspace{0.2cm}

\noindent $\bullet$ Both connected components of $\Sig \setm \{a\}$ have an infinite intersection with $\ha\Pi$. 
We denote by $\Sig_1$ and $\Sig_2$ the connected components of $\Sig \setm \{a\}$. 
We set $\ha\Pi\cap \Sig_1=\{b_n\,|\,n\in \N\}$ and $\ha\Pi\cap \Sig_2=\{c_n\,|\,n\in \N\}$ such that $\lim_{n\rit \infty}b_n=a=\lim_{n\rit \infty} c_n$. 
We moreover assume that the sequence $|\xi\inv(b_n)-\xi\inv(a)|$ and $|\xi\inv(c_n)-\xi\inv(a)|$ are decreasing (with limit $0$). 
Let $[b_0,c_0]_\Sig$ be the compact segment of $\Sig$ bounded by $b_0$ and $c_0$ that contains $a$.
Set $\NN_\Ga:= \bigcup_{t\in \R}\left(\phi^t_X([b_0,c_0]_\Sig\right)$. Then $\NN_\Ga$ is a compact neighborhood of $\Ga$, $\phi_X$-invariant and by lemma \ref{hpdansS}, $\hp(\phi_X, \NN_\Ga)\leq 1$.
 
\noindent $\bullet$ Only one connected component of $\Sig \setm \{a\}$ have an infinite intersection with $\ha\Pi$. Let $\Sig_f$ be the connected component with finite (and possibly empty) intersection with $\ha\Pi$ and $\Sig_i$ be the other one. 
As before, we set $\ha\Pi\cap \Sig_i:=\{c_n\,|\,n \in \N\}$ with $\lim_{n\rit \infty}c_n=a$ and $|\xi\inv(c_n)-\xi\inv(a)|$ decreasing. Let $[a,c_0]_\Sig$ be the compact segment of $\Sig$ bounded by $a$ and $c_0$ and set $S_i:=\bigcup_{t\in \R}\left(\phi^t_X([a,c_0]_\Sig\right)$. Then $\hp(\phi_X, S_i) \leq 1$.

If $\Sig_f\cap \ha\Pi =\emptyset$, then $\Sig_f$ is contained in the interior of a zone  $D$ of type (I), (II) or (III) and  $\hp(\phi_X,D)\leq 1$.
Then the union $\NN_\Ga:= S_i\cup D$ is a compact and $\phi_X$-invariant neighborhood of $\Ga$ and using property \ref{ptehphw} 5), one sees that $\hp(\phi_X, \NN_\Ga)\leq 1$.

If $\Sig_f\cap \Pi \neq\emptyset$, we choose $b\in \Sig_f\cap \Pi$, we denote by $[a,b]_\Sig$ the compact segment of $\Sig$ bounded by $a$ and $b$ and we set $S_f:=\bigcup_{t\in \R}\left(\phi^t_X([a,b]_\Sig\right)$.
Again, one immediately sees that $\NN_\Ga:= S_i\cup S_f$ satisfies all the required properties.
\end{proof}

\begin{proof}[Proof of Proposition \ref{periodicorbit}] 
Observe first that the union $\Pi$ of all periodic orbits is a compact subset of $\T^2$. Indeed, its complementary is open since it is the union of the interiors of zones of type (I) or (II).
Therefore $\Pi$ admits a finite covering by invariant subsets over which $\hp(\phi_X) \leq 1$. Now the complementary of this covering is a finite union of interior of zones of type (I), (II) or (III).
This way, we get a finite covering of $\T^2$ by $\phi_X$-invariant subset on which $\hp \leq 1$. By Proposition \ref{type12}, one sees that as soon as there exists a zone of type (I) or (II), $\hp(\phi_X)=1$. Then, according to Proposition \ref{type3} $\hp(\phi_X)=0$ if and only if $\phi_X$ is conjugate to a rotation.
\end{proof}


\end{document}